\newtheorem{theorem}{Theorem}[section]
\newtheorem{lemma}[theorem]{Lemma}
\newtheorem{proposition}[theorem]{Proposition}
\newtheorem{corollary}[theorem]{Corollary}
\theoremstyle{definition}
\newtheorem{definition}[theorem]{Definition}
\newtheorem{example}[theorem]{Example}
\newtheorem{remark}[theorem]{Remark}
\numberwithin{equation}{section}
\def\<{\langle}
\def\>{\rangle}
\long\def\alert#1{\smallskip{\hskip\parindent\vrule%
\vbox{\advance\hsize-2\parindent\hrule\smallskip\parindent.4\parindent%
\narrower\noindent#1\smallskip\hrule}\vrule\hfill}\smallskip}
\begin{document} 
\title[Uniformly 2-absorbing Primary Ideals of Commutative Rings]{Uniformly
2-absorbing Primary Ideals of Commutative Rings}
\author[Mostafanasab, Tekir and ULUCAK]{Hojjat Mostafanasab, \"{U}nsal Tekir and G\"{U}L\c{S}EN ULUCAK}

\subjclass[2010]{Primary: 13A15; secondary: 13E05; 13F05}
\keywords{Uniformly 2-absorbing primary ideal, Noether strongly 2-absorbing
primary ideal, 2-absorbing primary ideal.}

\begin{abstract}
In this study, we introduce the concept of ``uniformly 2-absorbing primary
ideals'' of commutative rings, which imposes a certain boundedness condition
on the usual notion of 2-absorbing primary ideals of commutative rings. Then
we investigate some properties of uniformly 2-absorbing primary ideals of
commutative rings with examples. Also, we investigate a specific kind of uniformly 
2-absorbing primary ideals by the name of ``special 2-absorbing primary ideals''.
\end{abstract}

\maketitle

\section{\protect\bigskip Introduction}

Throughout this paper, we assume that all rings are commutative with $1\neq
0 $. Let $R$ be a commutative ring. An ideal $I$ of $R$ is  a  {\it proper ideal} if 
$I\neq R$. Then $Z_{I}(R)=\{r\in R~|~rs\in I$ for some $s\in R\backslash I\}$
for a proper ideal $I$ of $R$. Additively, if $I$ is an
ideal of commutative ring $R$, then {\it the radical of} $I$ is given by $\sqrt{I}%
=\{r\in R~|~r^{n}\in I$ for some positive integer $n\}$. 
Let $I,J$ be two ideals of $R$. We will
denote by $(I:_{R}J)$, the set
of all $r\in R$ such that $rJ\subseteq I$.

Cox and Hetzel have introduced uniformly primary ideals of a
commutative ring with nonzero identity in \textrm{\cite{CH}}.
They said that a proper ideal $Q$ of the commutative ring $R$ is {\it uniformly primary} if there exists a positive integer $n$
such that whenever $r,s\in R$ satisfy $rs\in Q$ and $r\notin Q$, then $s^n\in Q$.
A uniformly primary ideal $Q$  has order $N$ and write $ord_R(Q)=N$, 
or simply $ord( Q) = N$ if the ring $R$ is understood, if $N$ is the smallest positive
integer for which the aforementioned property holds.

Badawi \cite{B} said that a proper ideal $I$ of $R$ is a {\it 2-absorbing ideal of}  
$R$ if whenever $a, b, c\in R$ and $abc\in I$, then $ab\in I$ or
$ac\in I$ or $bc\in I$. He proved that $I$ is a $2$-absorbing
ideal of $R$ if and only if whenever $I_1, I_2, I_3$ are ideals of
$R$ with $I_1I_2I_3\subseteq I$, then $I_1I_2\subseteq I$ or
$I_1I_3\subseteq I$ or $I_2I_3\subseteq I$. Anderson and Badawi
\cite{AB1} generalized the notion of $2$-absorbing ideals to
$n$-absorbing ideals. A proper ideal $I$ of $R$ is called an $n$-{\it absorbing} (resp.
{\it a strongly} $n$-{\it absorbing}) {\it ideal} if whenever $x_1\cdots x_{n+1}\in I$ for
$x_1,\dots,x_{n+1}\in R$ (resp. $I_{1}\cdots I_{n+1}\subseteq I$ for
ideals $I_1,\dots,I_{n+1}$ of $R$), then there are $n$ of the
$x_i$'s (resp. $n$ of the $I_i$'s) whose product is in $I$.
Badawi et. al. \cite{BTY} defined a proper ideal $I$ of $R$ to be a {\it 2-absorbing primary 
ideal} of $R$ if whenever $a, b, c\in R$ and $abc\in I$, then either $ab\in I$ or $%
ac\in \sqrt{I}$ or $bc\in \sqrt{I}$. Let $I$ be a 2-absorbing primary ideal of $R$. Then $P=\sqrt{I}$
is a 2-absorbing ideal of $R$ by \textrm{\cite[Theorem 2.2]{BTY}}: We say that $I$ is a $P$-2-absorbing primary ideal
of $R$. For more studies concerning 2-absorbing (submodules) ideals we refer to \cite{YB},\cite{Mos},\cite{nas},\cite{YF},\cite{YF2}.
These concepts motivate us to introduce a generalization of
uniformly primary ideals. 
A proper ideal $Q$ of $R$
is said to be a {\it uniformly 2-absorbing primary ideal of} $R$ if there exists a
positive integer $n$ such that whenever $a,b,c\in R$ satisfy $abc\in Q$, $%
ab\notin Q$ and $ac\notin \sqrt{Q}$, then $(bc)^{n}\in Q$.
In particular, if for $n=1$ the above property holds, then we say that 
$Q$ is a {\it special 2-absorbing primary ideal of} $R$.

In section 2, 
we introduce the concepts of uniformly 2-absorbing primary ideals and Noether
strongly 2-absorbing primary ideals. Then we
investigate the relationship between uniformly 2-absorbing primary ideals,
Noether strongly 2-absorbing primary ideals and 2-absorbing primary 
ideals. After that, in Theorem \ref{main1} we characterize uniformly 2-absorbing primary ideals.
We show that if $Q_1,~Q_2$ are uniformly primary ideals
of ring $R$, then $Q_1\cap Q_2$ and $Q_1Q_2$ are uniformly 2-absorbing primary ideals of $R$, 
Theorem \ref{intersection}.  Let $R=R_1\times R_2$, where $R_1$ and $R_2$
are rings with $1\neq0$. It is shown (Theorem \ref{product1}) that a proper ideal $Q$ of $R$
is a uniformly 2-absorbing primary ideal of $R$ if and only if either $Q=Q_1\times R_2$ 
for some uniformly 2-absorbing primary ideal $Q_1$ of $R_1$ or $Q=R_1\times Q_2$ for some uniformly 2-absorbing primary
ideal $Q_2$ of $R_2$ or $Q=Q_1\times Q_2$ for some uniformly primary ideal $
Q_1$ of $R_1$ and some uniformly primary ideal $Q_2$ of $R_2$.

In section 3, we give some properties of special 2-absorbing primary ideals. For example, 
in Theorem \ref{special} we show that $Q$ is a special 2-absorbing primary ideal of $R$ 
if and only if for every ideals $I,J,K$ of $R$, $IJK\subseteq Q$ implies that either $
IJ\subseteq \sqrt{Q}$ or $IK\subseteq Q$ or $JK\subseteq Q$. We prove that
if $Q$ is a special 2-absorbing primary ideal of $R$ and $
x\in R\backslash \sqrt{Q}$, then $(Q:_Rx)$ is a special 2-absorbing primary ideal of $R$, Theorem \ref{main2}.
It is proved (Theorem \ref{main3}) that an irreducible ideal $Q$ of $R$ is special
2-absorbing primary if and only if $(Q:_Rx)=(Q:_Rx^2)$ for every $x\in
R\backslash\sqrt{Q}$. Let $R$ be a Pr\"{u}fer domain and $I$ be an ideal of $R$. 
In Corollary \ref{last} we show that $Q$ is a special 2-absorbing primary ideal of $R$
if and only if $Q[X]$ is a special 2-absorbing primary ideal of $R[X]$.
\bigskip

\section{Uniformly 2-absorbing primary ideals}

Let $Q$ be a $P$-primary ideal of $R$. We recall from \cite{CH} that $Q$ is a {\it Noether strongly 
primary ideal of} $R$ if $P^{n}\subseteq Q$ for some positive integer $n$.
We say that $N$ is the exponent of $Q$ if $N$ is the
smallest positive integer for which the above property holds and it is denoted
by $\mathfrak{e}(Q)=N$.
\begin{definition}
Let $Q$ be a proper ideal of a commutative ring $R$.

\begin{enumerate}
\item $Q$ is a {\it uniformly 2-absorbing primary ideal of} $R$ if there exists a
positive integer $n$ such that whenever $a,b,c\in R$ satisfy $abc\in Q$, $%
ab\notin Q$ and $ac\notin\sqrt{Q}$, then $(bc)^{n}\in Q$. We call that $N$
is order of $Q$ if $N$ is the smallest positive integer for which the above
property holds and it is denoted by $\mbox{2-}ord_{R}(Q)=N$ or $\mbox{2-}%
ord(Q)=N$.

\item $P$-2-absorbing primary ideal $Q$ is a {\it Noether strongly 2-absorbing primary ideal of} $R$ if $P^{n}\subseteq Q$ for some positive integer $n$.
We say that $N$ is the exponent of $Q$ if $N$ is the
smallest positive integer for which the above property holds and it is denoted
by 2-$\mathfrak{e}(Q)=N$.
\end{enumerate}
\end{definition}

A {\it valuation ring} is an integral domain $V$ such that for every element $x$ of its field of fractions $K$, at least one of $x$ or $x^{-1}$ belongs to $K$.
\begin{proposition}
Let $V$ be a valuation ring with the quotient field $K$ and let $Q$ be a
proper ideal of $V$.

\begin{enumerate}
\item Q is a uniformly 2-absorbing primary ideal of $V$;

\item There exists a positive integer $n$ such that for every $x,y,z\in K$
whenever $xyz\in Q$ and $xy\notin Q$, then $xz\in\sqrt{Q}$ or $(yz)^n\in Q$.
\end{enumerate}
\end{proposition}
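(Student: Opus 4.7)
The implication (2) $\Rightarrow$ (1) is immediate: restricting the condition in (2) to $x,y,z\in V\subseteq K$ is exactly the definition of a uniformly 2-absorbing primary ideal. So all of the content lies in (1) $\Rightarrow$ (2), and the plan is to leverage the defining property of a valuation ring, namely that for every nonzero $w\in K$ one has $w\in V$ or $w^{-1}\in V$, to reduce an arbitrary triple in $K$ to a triple in $V$ (where the uniformly 2-absorbing primary hypothesis applies directly).

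Take $n=\mbox{2-}ord_R(Q)$ and fix $x,y,z\in K$ with $xyz\in Q$ and $xy\notin Q$; my goal is to produce either $xz\in\sqrt{Q}$ or $(yz)^n\in Q$ with this same $n$. First I would eliminate the possibility that $z\notin V$: in that case $z^{-1}\in V$, and since $Q$ is an ideal of $V$ the element $xy=z^{-1}(xyz)$ would lie in $Q$, contradicting $xy\notin Q$. Hence $z\in V$.

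Next I would treat $x$ and $y$ separately. If $x\notin V$ then $x^{-1}\in V$, so $yz=x^{-1}(xyz)\in Q$ and therefore $(yz)^n\in Q$, which gives the desired conclusion at once. Symmetrically, if $y\notin V$ then $y^{-1}\in V$ and $xz=y^{-1}(xyz)\in Q\subseteq\sqrt{Q}$, again giving the conclusion. The only remaining case is $x,y,z\in V$, and then $xyz\in Q$ with $xy\notin Q$ combined with the uniformly 2-absorbing primary property of order $n$ forces $xz\in\sqrt{Q}$ or $(yz)^n\in Q$. Collecting the cases completes the proof.

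There is no real obstacle here beyond the careful case analysis: the valuation ring hypothesis is strong enough that whenever one of the three factors of an element of $Q$ fails to lie in $V$, the product of the other two is forced into $Q$ and the claim becomes trivial. The mild subtlety is simply remembering to dispose of the case $z\notin V$ first, since it would otherwise produce the false statement $xy\in Q$ rather than a useful conclusion.
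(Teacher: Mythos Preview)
Your proof is correct and follows essentially the same approach as the paper: first rule out $z\notin V$ via the contradiction $xy=z^{-1}(xyz)\in Q$, then handle $x\notin V$ and $y\notin V$ separately by inverting to force $yz\in Q$ or $xz\in Q$, and finally apply the uniformly 2-absorbing primary hypothesis when $x,y,z\in V$. If anything, your write-up is slightly more explicit than the paper's in tracking the exponent $n$ throughout.
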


\begin{proof}
(1)$\Rightarrow$(2) Assume that $Q$ is a uniformly 2-absorbing primary ideal
of $V$. Let $xyz\in Q$ for some $x,y,z\in K$ such that $%
xy\notin Q$ . If $z\notin V$, then $z^{-1}\in V$, since $V$ is valuation. So 
$xyzz^{-1}=xy\in Q$, a contradiction. Hence $z\in V$. If $x,y\in V$, then
there is nothing to prove. If $y\notin V$, then $xz\in Q\subseteq\sqrt{Q}$,
and if $x\notin V$, then $yz\in Q$. Consequently we have the claim.\newline
(2)$\Rightarrow$(1) Is clear.
\end{proof}

\begin{proposition}
Let $Q_1,~Q_2$ be two Noether strongly primary ideals of ring $R$. Then $%
Q_1\cap Q_2$ and $Q_1Q_2$ are Noether strongly 2-absorbing primary ideals of 
$R$ such that 2-$\mathfrak{e}(Q_1\cap Q_2)\leq max\{\mathfrak{e}(Q_1),%
\mathfrak{e}(Q_2)\}$ and 2-$\mathfrak{e}(Q_1Q_2)\leq \mathfrak{e}(Q_1)+%
\mathfrak{e}(Q_2)$.
\end{proposition}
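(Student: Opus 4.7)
The plan is to set $P_i=\sqrt{Q_i}$ and $n_i=\mathfrak{e}(Q_i)$ for $i=1,2$, so that each $Q_i$ is $P_i$-primary with $P_i^{n_i}\subseteq Q_i$. The argument then splits cleanly into two parts: first, confirm that $Q_1\cap Q_2$ and $Q_1Q_2$ are 2-absorbing primary and identify their radicals; second, exhibit the required power inclusions yielding the 2-exponent bounds.

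For the first part I would invoke \cite[Theorem 2.4]{BTY}, which asserts that both the intersection and the product of any two primary ideals of $R$ are 2-absorbing primary. Since $Q_1$ and $Q_2$ are in particular primary (being Noether strongly primary), this immediately hands us the 2-absorbing primary property of $Q_1\cap Q_2$ and $Q_1Q_2$. The radicals are then pinned down by the standard identities $\sqrt{Q_1\cap Q_2}=\sqrt{Q_1}\cap\sqrt{Q_2}=P_1\cap P_2$ and $\sqrt{Q_1Q_2}=\sqrt{Q_1\cap Q_2}=P_1\cap P_2$, so $P_1\cap P_2$ is the 2-absorbing radical in each case.

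For the exponent bounds, set $n=\max\{n_1,n_2\}$. Then
\[
(P_1\cap P_2)^{n}\subseteq P_1^{n}\cap P_2^{n}\subseteq Q_1\cap Q_2,
\]
yielding $\mbox{2-}\mathfrak{e}(Q_1\cap Q_2)\leq \max\{n_1,n_2\}$. For the product, I would split the exponent as
\[
(P_1\cap P_2)^{n_1+n_2}=(P_1\cap P_2)^{n_1}(P_1\cap P_2)^{n_2}\subseteq P_1^{n_1}P_2^{n_2}\subseteq Q_1Q_2,
\]
giving $\mbox{2-}\mathfrak{e}(Q_1Q_2)\leq n_1+n_2$.

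The only nontrivial ingredient is the 2-absorbing primary conclusion itself; once that is imported from \cite{BTY}, the remainder of the proof reduces to elementary power-and-intersection inclusions between the primes $P_1,P_2$, together with the Noether-strongly-primary hypothesis $P_i^{n_i}\subseteq Q_i$. Thus there is no substantive obstacle beyond correctly applying the existing structure theorem and keeping track of how exponents add up when one passes from $Q_1\cap Q_2$ to the (generally smaller) ideal $Q_1Q_2$.
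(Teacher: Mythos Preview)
Your argument is correct and follows the same approach as the paper: both invoke \cite[Theorem 2.4]{BTY} to obtain the 2-absorbing primary property of $Q_1\cap Q_2$ and $Q_1Q_2$. In fact, the paper's proof stops there and leaves the exponent bounds unwritten, whereas you carry out the radical identification $\sqrt{Q_1\cap Q_2}=\sqrt{Q_1Q_2}=P_1\cap P_2$ and the explicit power inclusions $(P_1\cap P_2)^{\max\{n_1,n_2\}}\subseteq Q_1\cap Q_2$ and $(P_1\cap P_2)^{n_1+n_2}\subseteq Q_1Q_2$; these are exactly what is needed and your version is more complete than the paper's own proof.
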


\begin{proof}
Since $Q_1,~Q_2$ are primary ideals of $R$, then $Q_1\cap Q_2$ and $Q_1Q_2$
are 2-absorbing primary ideals of $R$, by \textrm{\cite[Theorem 2.4]{BTY}}.
\end{proof}

\begin{proposition}\label{uni-abs}
If $Q$ is a uniformly 2-absorbing primary ideal of $R,$ then $Q$ is a
2-absorbing primary ideal of $R$.
\end{proposition}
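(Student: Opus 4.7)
The plan is to unwind the two definitions and observe that the conclusion of the ``uniformly'' condition is strictly stronger than the conclusion of the ``2-absorbing primary'' condition in the relevant case.

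First I would fix $a,b,c \in R$ with $abc \in Q$ and aim to verify the 2-absorbing primary condition, namely that $ab \in Q$ or $ac \in \sqrt{Q}$ or $bc \in \sqrt{Q}$. If either of the first two alternatives holds, there is nothing to prove, so assume $ab \notin Q$ and $ac \notin \sqrt{Q}$. Then invoke the hypothesis that $Q$ is uniformly 2-absorbing primary with some order $n = \mbox{2-}ord_R(Q)$: this gives $(bc)^n \in Q$.

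From $(bc)^n \in Q$ the definition of the radical immediately yields $bc \in \sqrt{Q}$, which is the third alternative in the 2-absorbing primary definition. This completes the verification.

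There is no genuine obstacle here; the result is essentially a tautology once one notices that the uniform hypothesis outputs a power of $bc$ in $Q$, whereas the 2-absorbing primary conclusion only asks for $bc$ itself to lie in $\sqrt{Q}$. The only thing to be careful about is handling the two cases $ab \in Q$ and $ac \in \sqrt{Q}$ at the outset, so that the uniform hypothesis can actually be applied in its stated form.
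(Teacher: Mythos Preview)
Your proof is correct and is exactly the straightforward unwinding of definitions that the paper has in mind; the paper's own proof consists of the single word ``Straightforward.''
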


\begin{proof}
Straightforward.
\end{proof}

\begin{proposition} \label{prop1}
Let $R$ be a ring and $Q$ be a proper ideal of $%
R$.

\begin{enumerate}
\item If $Q$ is a 2-absorbing ideal of $R$, then 

$(a)$ $Q$ is a Noether strongly 2-absorbing primary ideal with 2-$\mathfrak{e}%
(Q)\leq2 $.

$(b)$ $Q$ is a uniformly 2-absorbing primary ideal with 2-$ord(Q)=1$.

\item If $Q$ is a uniformly primary ideal of $R$, then it is a uniformly
2-absorbing primary ideal with 2-$ord(Q)=1$.
\end{enumerate}
\end{proposition}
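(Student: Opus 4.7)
The plan is to verify the three assertions one at a time, as each reduces to unwinding the definitions with at most one external ingredient.

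For part (1)(a), I will first observe that a 2-absorbing ideal is automatically 2-absorbing primary: the conclusion $ab\in Q$ or $ac\in Q$ or $bc\in Q$ upgrades at once, via $Q\subseteq\sqrt{Q}$, to $ab\in Q$ or $ac\in\sqrt{Q}$ or $bc\in\sqrt{Q}$. Then I will invoke Badawi's structural result for 2-absorbing ideals, namely that if $Q$ is 2-absorbing then $(\sqrt{Q})^{2}\subseteq Q$. Writing $P=\sqrt{Q}$, this exhibits $Q$ as a $P$-2-absorbing primary ideal with $P^{2}\subseteq Q$, so by definition $\text{2-}\mathfrak{e}(Q)\leq 2$.

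For part (1)(b), I will take an arbitrary triple $a,b,c\in R$ with $abc\in Q$, $ab\notin Q$ and $ac\notin\sqrt{Q}$. Since $Q\subseteq\sqrt{Q}$, we also have $ac\notin Q$. The 2-absorbing property applied to $abc\in Q$ then forces $bc\in Q$ directly, so $(bc)^{1}\in Q$; this shows $n=1$ suffices, hence $\text{2-}ord(Q)=1$.

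For part (2), I will use that every uniformly primary ideal is in particular primary. Suppose $abc\in Q$ with $ab\notin Q$ and $ac\notin\sqrt{Q}$. Reading the relation as $(ab)c\in Q$ with $ab\notin Q$, primary-ness forces $c\in\sqrt{Q}$, whence $ac\in\sqrt{Q}$, contradicting the hypothesis. Consequently the defining condition of a uniformly 2-absorbing primary ideal is vacuously satisfied with $n=1$, and so $\text{2-}ord(Q)=1$.

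The only external input required is the inclusion $(\sqrt{Q})^{2}\subseteq Q$ for a 2-absorbing $Q$; without this quantitative fact, part (1)(a) would only yield a qualitative statement. The remaining steps are entirely formal, so I do not anticipate any real obstacle.
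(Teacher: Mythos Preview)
Your proposal is correct and matches the paper's proof essentially line for line: part (1)(a) is exactly the paper's appeal to Badawi's $(\sqrt{Q})^2\subseteq Q$, part (1)(b) spells out what the paper calls ``evident,'' and part (2) is the same primary-ideal argument with the cosmetic difference that you factor $abc=(ab)c$ and reach a contradiction, while the paper factors $abc=b(ac)$ and obtains $b\in Q$ (hence $bc\in Q$) directly.
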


\begin{proof}
(1) $(a)$ If $Q$ is a 2-absorbing ideal, then it is a 2-absorbing primary
ideal and $(\sqrt{Q})^2\subseteq Q$, by {\cite[Theorem 2.4]{B}}.
 
(b) Is evident.\newline
(2) Let $Q$ be a uniformly primary ideal of $R$ and let $abc\in Q$ for some $a,b,c\in R$
such that $ab\notin Q$ and $ac\notin\sqrt{Q}$. Since $Q$ is primary, $abc\in Q$ and $ac\notin\sqrt{Q}$,
then $b\in Q$. Therefore $bc\in Q$. Consequently $Q$ is a uniformly
2-absorbing primary ideal with 2-$ord(Q)=1$.
\end{proof}

\begin{example}
Let $R=K[X,Y]$ where $K$ is a field. Then $Q=(X^{2},XY,Y^{2})R$ is a
Noether strongly $(X,Y)R$-primary ideal of $R$ and so it is a
Noether strongly 2-absorbing primary ideal of $R$.
\end{example}

\begin{proposition}\label{noe-uni}
\label{noeuni} If $Q$ is a Noether strongly 2-absorbing primary ideal of $R$%
, then $Q$ is a uniformly 2-absorbing primary ideal of $R$ and $\mbox{2-}%
ord(Q)\leq $2-$\mathfrak{e}(Q)$.
\end{proposition}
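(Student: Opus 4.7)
The plan is to chase the definitions directly, since the Noether condition $P^N\subseteq Q$ feeds the exponent we need essentially for free. Set $P=\sqrt{Q}$ and $N=\text{2-}\mathfrak{e}(Q)$, so by definition $P^N\subseteq Q$. Our task is to exhibit a positive integer bounded by $N$ that witnesses the uniform 2-absorbing primary property.

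Take arbitrary $a,b,c\in R$ with $abc\in Q$, $ab\notin Q$ and $ac\notin\sqrt{Q}$. By hypothesis $Q$ is $P$-2-absorbing primary, so the usual trichotomy for 2-absorbing primary ideals forces at least one of $ab\in Q$, $ac\in\sqrt{Q}$, or $bc\in\sqrt{Q}$ to hold. The first two are ruled out by our assumptions, so we must have $bc\in\sqrt{Q}=P$.

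Now I invoke the Noether hypothesis: since $bc\in P$ we have $(bc)^N\in P^N\subseteq Q$. Thus the integer $N$ witnesses the uniform 2-absorbing primary property, which simultaneously shows $Q$ is uniformly 2-absorbing primary and gives the bound $\text{2-}ord(Q)\leq N=\text{2-}\mathfrak{e}(Q)$.

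There is really no obstacle here; the statement is essentially the observation that the radical containment $P^N\subseteq Q$ promotes the ``$bc\in\sqrt{Q}$'' clause of the ordinary 2-absorbing primary definition into the ``$(bc)^N\in Q$'' clause of the uniform version, with the same integer $N$. The only thing to double-check is that in the three-way disjunction one is genuinely allowed to conclude $bc\in\sqrt{Q}$ when the other two alternatives fail, but this is immediate from the definition of a 2-absorbing primary ideal.
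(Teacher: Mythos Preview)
Your proof is correct and follows essentially the same approach as the paper's own proof: both use the 2-absorbing primary trichotomy to force $bc\in\sqrt{Q}$, then apply $(\sqrt{Q})^{N}\subseteq Q$ to obtain $(bc)^{N}\in Q$ and the bound $\text{2-}ord(Q)\leq\text{2-}\mathfrak{e}(Q)$.
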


\begin{proof}
Let $Q$ be a Noether strongly 2-absorbing primary ideal of $R$. Now, let $a,b,c\in R$ such that $abc\in Q$, $ab\notin Q$, $%
ac\notin\sqrt{Q}$. Then $bc\in \sqrt{Q}$ since $Q$
is a 2-absorbing primary ideal of $R$. Thus $(bc)^{2\mbox{-}\mathfrak{e}%
(Q)}\in (\sqrt{Q})^{2\mbox{-}\mathfrak{e}(Q)}\subseteq Q$. Therefore, $Q$ is a
uniformly 2-absorbing primary ideal and also $\mbox{2-}ord(Q)\leq 2\mbox{-}%
\mathfrak{e}(Q)$.
\end{proof}

In the following example, we show that the converse of Proposition \ref{noe-uni} is
not true. We make use of \textrm{\cite[Example 6 and Example 7]{CH}}

\begin{example}
Let $R$ be a ring of characteristic 2 and $T=R[X]$ where $%
X=\{X_{1},X_{2},X_{3},...\}$ is a set of indeterminates over $R$. Let $%
Q=(\{X_{i}^{2}\}_{i=1}^{\infty })T$. By \textrm{\cite[Example 7]{CH}} $Q$ is a
uniformly $P$-primary ideal of $T$ with $ord_T(Q)=1$ where $P=(X)T$. Then $Q$ is a
uniformly 2-absorbing primary ideal of $T$ with 2-$ord_T(Q)=1$, by Proposition \ref{prop1}(2). But $Q$ is not a
Noether strongly 2-absorbing primary ideal since for every positive integer $%
n$, $P^{n}\nsubseteq Q$.
\end{example}

\begin{remark}
Every 2-absorbing ideal of ring $R$ is a uniformly 2-absorbing primary
ideal, but the converse does not necessarily hold. For example, let $p,~q$
be two distinct prime numbers. Then $p^{2}q\mathbb{Z}$ is a 2-absorbing
primary ideal of $\mathbb{Z}$, \textrm{\cite[Corollary 2.12]{BTY}}. On the
other hand $(\sqrt{p^{2}q\mathbb{Z}})^{2}=p^{2}q^{2}\mathbb{Z}\subseteq
p^{2}q\mathbb{Z}$, and so $p^{2}q\mathbb{Z}$ is a Noether strongly
2-absorbing primary ideal of $\mathbb{Z}$. Hence Proposition \ref{noeuni}
implies that $p^{2}q\mathbb{Z}$ is a uniformly 2-absorbing primary ideal.
But, notice that $p^{2}q\in p^{2}q\mathbb{Z}$ and neither $p^{2}\in p^{2}q%
\mathbb{Z}$ nor $pq\in p^{2}q\mathbb{Z}$ which shows that $p^{2}q\mathbb{Z}$
is not a 2-absorbing ideal of $\mathbb{Z}$. Also, it is easy to see that $%
p^{2}q\mathbb{Z}$ is not primary and so it is not a uniformly primary ideal
of $\mathbb{Z}$. Consequently the two concepts of uniformly primary ideals
and of uniformly 2-absorbing primary ideals are different in general.
\end{remark}

\begin{proposition}
\label{rad} Let $R$ be a ring and $Q$ be a proper ideal of $R$. If $Q$ is a
uniformly 2-absorbing primary ideal of $R$, then one of the following
conditions must hold:

\begin{enumerate}
\item $\sqrt{Q}=\mathfrak{p}$ is a prime ideal.

\item $\sqrt{Q}=\mathfrak{p}\cap\mathfrak{q}$, where $\mathfrak{p}$ and $%
\mathfrak{q}$ are the only distinct prime ideals of $R$ that are minimal
over $Q$.
\end{enumerate}
\end{proposition}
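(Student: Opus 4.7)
The plan is to reduce the statement to the known structural theorem for the radical of a 2-absorbing ideal. First, Proposition~\ref{uni-abs} tells us that $Q$, being uniformly 2-absorbing primary, is 2-absorbing primary in the ordinary sense. Then \cite[Theorem 2.2]{BTY}, which the paper has already invoked, gives that $\sqrt{Q}$ is a 2-absorbing ideal of $R$. Since $V(Q)=V(\sqrt{Q})$, the minimal primes over $Q$ coincide with those over $\sqrt{Q}$, so the whole proposition reduces to showing that a radical 2-absorbing ideal is either prime or the intersection of exactly two prime ideals, and that these are precisely its minimal primes.

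This reduced statement is exactly \cite[Theorem 2.4]{B} applied to $\sqrt{Q}$. Since the authors cite \cite{B} repeatedly, the cleanest write-up is simply to appeal to that theorem: case (1) corresponds to $\sqrt{Q}$ being prime, and case (2) to $\sqrt{Q}=\mathfrak{p}\cap\mathfrak{q}$ with $\mathfrak{p}$, $\mathfrak{q}$ the only two minimal primes over $\sqrt{Q}$, hence over $Q$.

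If instead one wants a self-contained argument, the engine is prime avoidance. Assuming three pairwise-incomparable minimal primes $\mathfrak{p}_1,\mathfrak{p}_2,\mathfrak{p}_3$ over $\sqrt{Q}$, one selects $a_i\in\mathfrak{p}_i\setminus(\mathfrak{p}_j\cup\mathfrak{p}_k)$ for $\{i,j,k\}=\{1,2,3\}$. Each pairwise product $a_ia_j$ escapes $\mathfrak{p}_k$, hence escapes $\sqrt{Q}$, while the triple product $a_1a_2a_3$ lies in $\mathfrak{p}_1\cap\mathfrak{p}_2\cap\mathfrak{p}_3$; provided $a_1a_2a_3\in\sqrt{Q}$, this contradicts the 2-absorbing property of $\sqrt{Q}$.

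The anticipated obstacle in the direct approach is precisely this last proviso: when $\sqrt{Q}$ has more than three minimal primes, $a_1a_2a_3$ need not lie in every minimal prime and so need not lie in $\sqrt{Q}$. Handling this cleanly requires either folding the extra minimal primes into one of the witnesses (a second layer of prime avoidance, in the spirit of Badawi's original proof) or passing to the reduced quotient $R/\sqrt{Q}$ and arguing there. For the present proposition, all of this is absorbed into \cite[Theorem 2.4]{B}, and the write-up will simply cite it.
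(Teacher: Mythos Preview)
Your proposal is correct and follows essentially the same route as the paper: reduce to $Q$ being 2-absorbing primary via Proposition~\ref{uni-abs}, then invoke the known structure theorem for the radical. The only difference is citation granularity---the paper cites \cite[Theorem~2.3]{BTY} directly (which packages the radical structure of a 2-absorbing primary ideal), whereas you unpack this into \cite[Theorem~2.2]{BTY} followed by \cite[Theorem~2.4]{B}; the mathematical content is identical.
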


\begin{proof}
Use {\cite[Theorem 2.3]{BTY}}.
\end{proof}


Let $R$ be a ring and $I$ be an ideal of $R$. We denote by $I^{[n]}$ the
ideal of $R$ generated by the $n$-th powers of all elements of $I$. If $n!$
is a unit in $R$, then $I^{[n]}=I^n$, see \cite{AKL}.

\begin{theorem}
Let $Q$ be a proper ideal of $R$. Then the following conditions are
equivalent:

\begin{enumerate}
\item $Q$ is uniformly primary;

\item There exists a positive integer $n$ such that for every ideals $I,J$
of $R$, $IJ\subseteq Q$ implies that either $I\subseteq Q$ or $%
J^{[n]}\subseteq Q$;

\item There exists a positive integer $n$ such that for every $a\in R$
either $a\in Q$ or $(Q:_Ra)^{[n]}\subseteq Q$;

\item There exists a positive integer $n$ such that for every $a\in R$
either $a^n\in Q$ or $(Q:_Ra)=Q$.
\end{enumerate}
\end{theorem}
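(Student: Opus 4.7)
The plan is to establish the cycle $(1)\Rightarrow(2)\Rightarrow(3)\Rightarrow(4)\Rightarrow(1)$, carrying the same positive integer $n$ through each implication (namely $n = ord_R(Q)$, which will also turn out to be an upper bound for the integers witnessing (2), (3), (4)).

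For $(1)\Rightarrow(2)$, assume $IJ\subseteq Q$ and $I\not\subseteq Q$, and pick $a\in I\setminus Q$. For every $b\in J$ we have $ab\in Q$ with $a\notin Q$, so the uniform primary hypothesis gives $b^{n}\in Q$. Since $J^{[n]}$ is by definition generated by $\{b^{n}:b\in J\}$, this yields $J^{[n]}\subseteq Q$. The implication $(2)\Rightarrow(3)$ is a specialization: given $a\notin Q$, put $I=Ra$ and $J=(Q:_{R}a)$. Then $IJ\subseteq Q$, and $I\not\subseteq Q$ because $a\in Ra\setminus Q$, so (2) forces $(Q:_{R}a)^{[n]}\subseteq Q$.

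The subtle step is $(3)\Rightarrow(4)$. I would argue by contradiction: suppose there is $a\in R$ with $a^{n}\notin Q$ and $(Q:_{R}a)\neq Q$. Since the containment $Q\subseteq(Q:_{R}a)$ is automatic, pick $b\in(Q:_{R}a)\setminus Q$. Applying (3) directly to $a$ gives only $b^{n}\in Q$, which is too weak. The key move is to apply (3) to $b$ instead: because $b\notin Q$, one has $(Q:_{R}b)^{[n]}\subseteq Q$. But $ab=ba\in Q$ means $a\in(Q:_{R}b)$, so $a^{n}\in(Q:_{R}b)^{[n]}\subseteq Q$, contradicting the choice of $a$. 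This symmetric swap of the roles of $a$ and $b$ is the only real obstacle in the proof.

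Finally, $(4)\Rightarrow(1)$ is a one-line application: if $rs\in Q$ and $r\notin Q$, then by (4) either $s^{n}\in Q$ (which is exactly the uniformly primary condition) or $(Q:_{R}s)=Q$; in the latter case $r\in(Q:_{R}s)=Q$, a contradiction. Hence $Q$ is uniformly primary of order at most $n$, closing the cycle.
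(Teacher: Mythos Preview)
Your proof is correct. The cycle $(1)\Rightarrow(2)\Rightarrow(3)\Rightarrow(4)\Rightarrow(1)$ closes cleanly, and your swap argument for $(3)\Rightarrow(4)$ is valid.

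The paper organizes the equivalences differently: it proves $(1)\Rightarrow(2)\Rightarrow(3)\Rightarrow(1)$ and then handles $(1)\Leftrightarrow(4)$ separately, declaring both $(3)\Rightarrow(1)$ and $(1)\Leftrightarrow(4)$ to be routine. This avoids your ``subtle step'' entirely: $(3)\Rightarrow(1)$ is a one-liner (if $rs\in Q$ and $r\notin Q$, then $s\in(Q:_{R}r)$, so $s^{n}\in(Q:_{R}r)^{[n]}\subseteq Q$), and $(1)\Rightarrow(4)$ is equally direct (if $a^{n}\notin Q$ and $ra\in Q$, then $r\in Q$). In effect, what you prove as the single implication $(3)\Rightarrow(4)$ via the symmetric swap of $a$ and $b$, the paper obtains as the composite $(3)\Rightarrow(1)\Rightarrow(4)$, each half of which is trivial. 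Your route is self-contained and shows something mildly interesting---that (3) already encodes the symmetry needed for (4)---but the paper's decomposition is shorter and makes clear that no real work is hiding anywhere in the theorem.
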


\begin{proof}
(1)$\Rightarrow $(2) Suppose that $Q$ is uniformly primary with $ord(Q)=n$.
Let $IJ\subseteq Q$ for some ideals $I,~J$ of $R$. Assume that neither $%
I\subseteq Q$ nor $J^{[n]}\subseteq Q$. Then there exist elements $a\in
I\backslash Q$ and $b^{n}\in J^{[n]}\backslash Q$, where $b\in J$. Since $%
ab\in IJ\subseteq Q$, then either $a\in Q$ or $b^{n}\in Q$, which is a
contradiction. Therefore either $I\subseteq Q$ or $J^{[n]}\subseteq Q$.%
\newline
(2)$\Rightarrow $(3) Note that $a(Q:_{R}a)\subseteq Q$ for every $a\in R$. 
\newline
(3)$\Rightarrow $(1) and (1)$\Leftrightarrow $(4) have easy verifications.
\end{proof}

\begin{corollary}
Let $R$ be a ring. Suppose that $n!$ is a unit in $R$ for every positive
integer $n$, and $Q$ is a proper ideal of $R$. The following conditions are
equivalent:

\begin{enumerate}
\item $Q$ is uniformly primary;

\item There exists a positive integer $n$ such that for every ideals $I,J$
of $R$, $IJ\subseteq Q$ implies that either $I\subseteq Q$ or $%
J^{n}\subseteq Q$;

\item There exists a positive integer $n$ such that for every $a\in R$
either $a\in Q$ or $(Q:_Ra)^{n}\subseteq Q$;

\item There exists a positive integer $n$ such that for every $a\in R$
either $a^n\in Q$ or $(Q:_Ra)=Q$.
\end{enumerate}
\end{corollary}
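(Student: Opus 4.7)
The plan is to derive this corollary directly from the preceding Theorem, leveraging the remark made just before that theorem that $I^{[n]} = I^{n}$ whenever $n!$ is a unit in $R$. Since the hypothesis of the corollary imposes exactly this condition for every positive integer $n$, the bracketed powers $I^{[n]}$ and $(Q:_R a)^{[n]}$ appearing in the theorem collapse to ordinary powers $I^n$ and $(Q:_R a)^n$.

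First I would observe that conditions (1) and (4) of the corollary are identical to conditions (1) and (4) of the preceding theorem (no powers need to be reinterpreted there), so their equivalence is immediate. Next I would rewrite condition (2) of the theorem as follows: the statement ``$J^{[n]} \subseteq Q$'' under our hypothesis is the same as ``$J^{n} \subseteq Q$'' because $J^{[n]} = J^{n}$ in any ring where $n!$ is invertible. Hence condition (2) of the corollary is literally condition (2) of the theorem. The same substitution converts condition (3) of the theorem into condition (3) of the corollary.

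Combining these observations, the four conditions of the corollary are pairwise equivalent exactly because the four conditions of the theorem are, and the theorem has already been established. The only potential issue is ensuring that the identity $I^{[n]} = I^{n}$ really applies to the ideal $J$ in (2) and to $(Q:_R a)$ in (3) for the specific $n$ produced by the theorem, but since the hypothesis guarantees $n!$ is a unit for every positive integer $n$, this applies uniformly and there is no obstacle.

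In short, the proof is a one-line application of the previous theorem together with the remark preceding it; no new computation is required.
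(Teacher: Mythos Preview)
Your proposal is correct and matches the paper's intent: the corollary is stated without proof precisely because it follows immediately from the preceding theorem together with the remark that $I^{[n]}=I^{n}$ whenever $n!$ is a unit in $R$. Your observation that conditions (1) and (4) carry over verbatim while (2) and (3) are obtained by the substitution $J^{[n]}=J^{n}$ and $(Q:_{R}a)^{[n]}=(Q:_{R}a)^{n}$ is exactly the intended argument.
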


In the following theorem we characterize uniformly 2-absorbing primary
ideals.

\begin{theorem}
\label{main1} Let $Q$ be a proper ideal of $R$. Then the following
conditions are equivalent:

\begin{enumerate}
\item $Q$ is uniformly 2-absorbing primary;

\item There exists a positive integer $n$ such that for every $a,b\in R$
either $(ab)^n\in Q$ or $(Q:_Rab)\subseteq(Q:_Ra)\cup(\sqrt{Q}:_Rb)$;

\item There exists a positive integer $n$ such that for every $a,b\in R$
either $(ab)^n\in Q$ or $(Q:_Rab)=(Q:_Ra)$ or $(Q:_Rab)\subseteq(\sqrt{Q}%
:_Rb)$;

\item There exists a positive integer $n$ such that for every $a,b\in R$ and
every ideal $I$ of $R$, $abI\subseteq Q$ implies that either $aI\subseteq Q$
or $bI\subseteq\sqrt{Q}$ or $(ab)^n\in Q$;

\item There exists a positive integer $n$ such that for every $a,b\in R$
either $ab\in Q$ or $(Q:_Rab)^{[n]}\subseteq(\sqrt{Q}:_Ra)\cup(Q:_Rb^n)$;

\item There exists a positive integer $n$ such that for every $a,b\in R$
either $ab\in Q$ or $(Q:_Rab)^{[n]}\subseteq(\sqrt{Q}:_Ra)$ or $%
(Q:_Rab)^{[n]}\subseteq(Q:_Rb^n)$.
\end{enumerate}
\end{theorem}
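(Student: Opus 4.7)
The plan is to prove (1)--(6) equivalent by splitting off two free equivalences and establishing two short cycles. The free pieces (2)$\Leftrightarrow$(3) and (5)$\Leftrightarrow$(6) both follow from the classical fact that an ideal contained in the set-theoretic union of two ideals lies in one of them: since $(Q:_R a)\subseteq(Q:_R ab)$ is automatic, the equality in (3) is really a one-sided inclusion, and applying the avoidance lemma to $(Q:_R ab)$ with the pair $(Q:_R a)$, $(\sqrt{Q}:_R b)$ gives (2)$\Leftrightarrow$(3); applying the same lemma to $(Q:_R ab)^{[n]}$ with $(\sqrt{Q}:_R a)$, $(Q:_R b^n)$ gives (5)$\Leftrightarrow$(6). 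For (3)$\Leftrightarrow$(4) I would pass between them via $I=(Q:_R ab)$ in one direction and the containment $I\subseteq(Q:_R ab)$ (coming from $abI\subseteq Q$) in the other. The substantive content is then (1)$\Leftrightarrow$(4) and (1)$\Leftrightarrow$(6).

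For (4)$\Rightarrow$(1), the key is an unexpected substitution: given $abc\in Q$, $ab\notin Q$, $ac\notin\sqrt{Q}$, apply (4) not to the pair $(a,b)$ with $I=(c)$ but to the pair $(b,c)$ with $I=(a)$; then $bc\cdot(a)=(abc)\subseteq Q$, and the three branches of (4) become $(ab)\subseteq Q$, $(ac)\subseteq\sqrt{Q}$, or $(bc)^n\in Q$, of which only the last is compatible with the hypotheses. For (1)$\Rightarrow$(4), I would assume $abI\subseteq Q$ with $(ab)^n\notin Q$, suppose $aI\not\subseteq Q$ and $bI\not\subseteq\sqrt{Q}$, and derive a contradiction via the classical $x+y$ trick: pick $x,y\in I$ with $ax\notin Q$ and $by\notin\sqrt{Q}$, observe that (1) applied to $(a,b,y)$ forces $ay\in\sqrt{Q}$ (else $(by)^n\in Q$), and then split on whether $ax\in\sqrt{Q}$; in the principal case $ax\notin\sqrt{Q}$, (1) applied to $(a,b,x)$ and $(a,b,x+y)$ forces both $bx$ and $b(x+y)$ into $\sqrt{Q}$, hence $by\in\sqrt{Q}$, a contradiction, while the residual case $ax\in\sqrt{Q}$ is handled using the swapped form of (1) on the triple $(b,a,\cdot)$.

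The main obstacle is (1)$\Rightarrow$(6), where the power $[n]$ creates a subtlety. I would establish the dichotomy directly: if every $c\in(Q:_R ab)$ has $ac\in\sqrt{Q}$, then $ac^n=c^{n-1}\cdot ac\in\sqrt{Q}$, so every generator of $(Q:_R ab)^{[n]}$ lies in $(\sqrt{Q}:_R a)$, whence $(Q:_R ab)^{[n]}\subseteq(\sqrt{Q}:_R a)$; otherwise, pick $c_0\in(Q:_R ab)$ with $ac_0\notin\sqrt{Q}$, so (1) gives $(bc_0)^n\in Q$, and for each $c\in(Q:_R ab)$ with $ac\notin\sqrt{Q}$ we have $(bc)^n\in Q$ directly, while for $c$ with $ac\in\sqrt{Q}$ one has $a(c+c_0)\notin\sqrt{Q}$ and (1) gives $(b(c+c_0))^n\in Q$, from which binomial expansion together with $(bc_0)^n\in Q$ allows extraction of $(bc)^n\in Q$; the cross terms $(bc)^k(bc_0)^{n-k}$ for $1\le k\le n-1$ are the delicate pieces, and I expect to control them by iterated applications of (1) to auxiliary triples built from products $c^jc_0^{n-j}\in(Q:_R ab)$, exploiting that $ac\in\sqrt{Q}$ propagates through powers. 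The reverse (6)$\Rightarrow$(1) is clean: from $abc\in Q$ with $ab\notin Q$, $ac\notin\sqrt{Q}$, $c^n\in(Q:_R ab)^{[n]}$, the $(\sqrt{Q}:_R a)$-branch would force $ac^n\in\sqrt{Q}$, hence $a^mc^{nm}\in Q$ for some $m$ and then $(ac)^{nm}=a^{nm-m}\cdot a^mc^{nm}\in Q$, contradicting $ac\notin\sqrt{Q}$; so the $(Q:_R b^n)$-branch applies and $(bc)^n=b^nc^n\in Q$. The bulk of the technical work, and the principal difficulty, is the binomial cross-term control in (1)$\Rightarrow$(6).
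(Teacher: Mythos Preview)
Your cycle structure and the free equivalences (2)$\Leftrightarrow$(3), (5)$\Leftrightarrow$(6), (3)$\Leftrightarrow$(4) match the paper. The two places you diverge are (1)$\Rightarrow$(4) and (1)$\Rightarrow$(6), and in both you make life harder than the paper does; in the second, you leave a real gap.

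For the first cycle, the paper does not prove (1)$\Rightarrow$(4) directly. It proves (1)$\Rightarrow$(2) in one line: given $(ab)^n\notin Q$ and $x\in(Q:_Rab)$, apply the definition to the ordered triple $(x,a,b)$; the contrapositive (with $(ab)^n\notin Q$) forces $xa\in Q$ or $xb\in\sqrt{Q}$, so $x\in(Q:_Ra)\cup(\sqrt{Q}:_Rb)$. Then (2)$\Rightarrow$(3)$\Rightarrow$(4) is formal. Your $x+y$ argument for (1)$\Rightarrow$(4) is not wrong in spirit, but the ``residual case $ax\in\sqrt{Q}$'' is not closed by the swap you propose: applying (1) to $(b,a,y)$ only returns $(ay)^n\in Q$, information you already have. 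One must instead apply (1) to $(a,x,b)$, using $ax\notin Q$ and $ab\notin\sqrt{Q}$ (the latter since $(ab)^n\notin Q$), to force $bx\in\sqrt{Q}$, and then repeat with $x+y$---at which point you are re-deriving the paper's (1)$\Rightarrow$(2) one element at a time.

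For (1)$\Rightarrow$(6), your binomial cross-term control does not work as sketched. For $1\le k\le n-1$ the element $c^kc_0^{\,n-k}$ lies in $(Q:_Rab)$, but since $ac\in\sqrt{Q}$ one has $a\cdot c^kc_0^{\,n-k}=(ac)\,c^{k-1}c_0^{\,n-k}\in\sqrt{Q}$, so (1) applied to $(a,b,c^kc_0^{\,n-k})$ yields nothing; and no reordering of the triple produces $b^nc^kc_0^{\,n-k}\in Q$, because the only conclusions available from (1) are of the form $(\cdot)^n\in Q$ or $\cdot\in\sqrt{Q}$, never a bare $n$-th-power-free membership in $Q$. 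The paper bypasses this by proving (1)$\Rightarrow$(5): for each $x\in(Q:_Rab)$ with $ab\notin Q$, (1) applied to $(a,b,x)$ gives $ax\in\sqrt{Q}$ or $(bx)^n\in Q$, hence $x^n\in(\sqrt{Q}:_Ra)\cup(Q:_Rb^n)$; it then asserts $(Q:_Rab)^{[n]}\subseteq(\sqrt{Q}:_Ra)\cup(Q:_Rb^n)$ and passes to (6) by avoidance. Your instinct that this step is delicate is sound---the paper only places the \emph{generators} $x^n$ in the union and does not explain why the ideal they generate remains in the union---but your proposed binomial fix does not close the gap either.
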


\begin{proof}
(1)$\Rightarrow $(2) Suppose that $Q$ is uniformly 2-absorbing primary with $%
\mbox{2-}ord(Q)=n$. Assume that $a,b\in R$ such that $(ab)^{n}\notin Q$. Let 
$x\in (Q:_{R}ab)$. Thus $xab\in Q$, and so either $xa\in Q$ or $xb\in \sqrt{Q%
}$. Hence $x\in (Q:_{R}a)$ or $x\in (\sqrt{Q}:_{R}b)$ which shows that $%
(Q:_{R}ab)\subseteq (Q:_{R}a)\cup (\sqrt{Q}:_{R}b)$.\newline
(2)$\Rightarrow $(3) By the fact that if an ideal is a subset of the union
of two ideals, then it is a subset of one of them.\newline
(3)$\Rightarrow $(4) Suppose that $n$ is a positive number which exists by
part (3). Let $a,b\in R$ and $I$ be an ideal of $R$ such that $abI\subseteq
Q $ and $(ab)^{n}\notin Q$. Then $I\subseteq (Q:_{R}ab)$, and so $I\subseteq
(Q:_{R}a)$ or $I\subseteq (\sqrt{Q}:_{R}b)$, by (3). Consequently $%
aI\subseteq Q$ or $bI\subseteq \sqrt{Q}$.\newline
(4)$\Rightarrow $(1) Is easy.\newline
(1)$\Rightarrow $(5) Suppose that $Q$ is uniformly 2-absorbing primary with $%
\mbox{2-}ord(Q)=n$. Assume that $a,b\in R$ such that $ab\notin Q$. Let $x\in
(Q:_{R}ab)$. Then $abx\in Q$. So $ax\in\sqrt{Q}$ or $(bx)^n\in Q$. Hence $%
x^n\in(\sqrt{Q}:_Ra)$ or $x^n\in(Q:_Rb^n)$. Consequently $%
(Q:_Rab)^{[n]}\subseteq(\sqrt{Q}:_Ra)\cup(Q:_Rb^n)$;\newline
(5)$\Rightarrow $(6) Is similar to the proof of (2)$\Rightarrow $(3).\newline
(6)$\Rightarrow $(1) Assume (6). Let $abc\in Q$ for some $a,b,c\in R$ such
that $ab\notin Q$. Then $c\in(Q:_Rab)$ and thus $c^n\in(Q:_Rab)^{[n]}$. So,
by part (6) we have that $c^n\in(\sqrt{Q}:_Ra)$ or $c^n\in(Q:_Rb^n)$.
Therefore $ac\in\sqrt{Q}$ or $(bc)^n\in Q$, and so $Q$ is uniformly
2-absorbing primary.
\end{proof}

\begin{corollary}
Let $R$ be a ring. Suppose that $n!$ is a unit in $R$ for every positive
integer $n$, and $Q$ is a proper ideal of $R$. The following conditions are
equivalent:

\begin{enumerate}
\item $Q$ is uniformly 2-absorbing primary;

\item There exists a positive integer $n$ such that for every $a,b\in R$
either $ab\in Q$ or $(Q:_Rab)^{n}\subseteq(\sqrt{Q}:_Ra)\cup(Q:_Rb^n)$;

\item There exists a positive integer $n$ such that for every $a,b\in R$
either $ab\in Q$ or $(Q:_Rab)^{n}\subseteq(\sqrt{Q}:_Ra)$ or $%
(Q:_Rab)^{n}\subseteq(Q:_Rb^n)$.
\end{enumerate}
\end{corollary}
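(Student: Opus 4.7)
The plan is to deduce this corollary directly from Theorem \ref{main1}, namely the equivalence of its conditions (1), (5), and (6). The only ingredient needed beyond Theorem \ref{main1} is the identification $I^{[n]} = I^{n}$ for every ideal $I$ of $R$, which is exactly the hypothesis that $n!$ is a unit in $R$ for every positive integer $n$ (the fact cited from \cite{AKL} in the paragraph preceding Theorem \ref{main1}).

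First, I would observe that conditions (2) and (3) of the corollary are obtained from conditions (5) and (6) of Theorem \ref{main1} by the literal substitution $(Q:_{R}ab)^{[n]} = (Q:_{R}ab)^{n}$. So once this substitution is justified, the equivalence (1) $\Leftrightarrow$ (2) $\Leftrightarrow$ (3) is immediate from Theorem \ref{main1}.

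Second, I would justify the substitution: under the standing hypothesis that $n!$ is a unit in $R$ for every positive integer $n$, the result cited from \cite{AKL} gives $I^{[n]} = I^{n}$ for every ideal $I$ of $R$ and every positive integer $n$. Applying this to the ideal $I = (Q:_{R}ab)$ turns condition (5) of Theorem \ref{main1} into condition (2) of the corollary and condition (6) of Theorem \ref{main1} into condition (3) of the corollary.

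No real obstacle is expected: the proof is essentially a translation through the identity $I^{[n]} = I^{n}$ and a citation of Theorem \ref{main1}. The one thing to be careful about is that the same integer $n$ appearing in the definition of $I^{[n]}$ is the one appearing in the quantifier "there exists a positive integer $n$"; since the hypothesis gives $I^{[m]} = I^{m}$ for \emph{every} positive integer $m$, this matching is automatic.
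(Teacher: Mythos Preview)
Your proposal is correct and matches the paper's approach exactly: the paper gives no explicit proof for this corollary, treating it as immediate from Theorem \ref{main1} together with the identity $I^{[n]}=I^n$ (valid when $n!$ is a unit, as cited from \cite{AKL}). Your write-up makes this implicit reasoning explicit and is entirely adequate.
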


\begin{proposition}
Let $Q$ be a uniformly 2-absorbing primary ideal of $R$ and $x\in
R\backslash Q$ be idempotent. The following conditions hold:

\begin{enumerate}
\item $(\sqrt{Q}:_Rx)=\sqrt{(Q:_Rx)}$.

\item $(Q:_Rx)$ is a uniformly 2-absorbing primary ideal of $R$ with 2-$%
ord((Q:_Rx))\leq$ 2-$ord(Q)$.
\end{enumerate}
\end{proposition}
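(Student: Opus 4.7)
The plan is to exploit idempotence systematically: because $x^n=x$ for every $n\geq 1$, any product of the form $(rx)^n$ collapses to $r^n x$, which is exactly what lets us freely move $x$ in and out of powers.

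For part (1), I would prove the two inclusions separately. If $r\in(\sqrt{Q}:_R x)$, then $rx\in\sqrt{Q}$, so $(rx)^n\in Q$ for some $n$; idempotence gives $(rx)^n = r^n x^n = r^n x$, hence $r^n\in(Q:_R x)$ and $r\in\sqrt{(Q:_R x)}$. Conversely, if $r\in\sqrt{(Q:_R x)}$ then $r^n x\in Q$ for some $n$, and the same identity $r^n x = r^n x^n = (rx)^n$ gives $rx\in\sqrt{Q}$. Both directions use nothing beyond idempotence and the definition of the radical.

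For part (2), set $n = \mbox{2-}ord(Q)$. First, note $(Q:_R x)$ is proper because $x\notin Q$ forces $1\notin(Q:_R x)$. Suppose $a,b,c\in R$ with $abc\in(Q:_R x)$, $ab\notin(Q:_R x)$ and $ac\notin\sqrt{(Q:_R x)}$; the goal is $(bc)^n\in(Q:_R x)$. The key rearrangement is to view the hypothesis as $a\cdot b\cdot(cx)\in Q$, i.e., apply the uniformly 2-absorbing primary property of $Q$ to the triple $(a,b,cx)$. I need to check the two exclusions for $Q$: if $ab\in Q$, then $abx\in Q$ (since $Q$ is an ideal), contradicting $ab\notin(Q:_R x)$; and if $a(cx)\in\sqrt{Q}$, then $ac\in(\sqrt{Q}:_R x)=\sqrt{(Q:_R x)}$ by part (1), contradicting the second hypothesis. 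Therefore $(b\cdot cx)^n\in Q$, which by idempotence equals $b^n c^n x^n = (bc)^n x$, giving $(bc)^n\in(Q:_R x)$ as required. This simultaneously proves $(Q:_R x)$ is uniformly 2-absorbing primary and yields $\mbox{2-}ord((Q:_R x))\leq n = \mbox{2-}ord(Q)$.

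The argument is essentially bookkeeping, so there is no serious obstacle; the one thing to get right is the direction of the reduction, namely rewriting $abc\in(Q:_R x)$ as $a\cdot b\cdot(cx)\in Q$ with $cx$ (not $bx$ or $ax$) absorbed into the third factor. This choice is what makes the hypothesis $ac\notin\sqrt{(Q:_R x)}$ translate directly, via part (1), to the condition $a(cx)\notin\sqrt{Q}$ needed by the defining property of $Q$.
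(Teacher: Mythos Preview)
Your proof is correct and follows essentially the same approach as the paper. The only difference is the grouping in part~(2): you apply the defining property of $Q$ to the triple $(a,b,cx)$, whereas the paper applies it to $(a,bc,x)$, which forces an additional case split (when $abc\in Q$ the paper must invoke the property a second time on $(a,b,c)$). Your choice of grouping is slightly more economical, but the underlying idea---absorbing $x$ into one factor and using part~(1) to translate the $\sqrt{Q}$ condition---is identical.
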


\begin{proof}
(1) Is easy.

(2) Suppose that 2-$ord(Q)=n$. Let $abc\in(Q:_Rx)$ for some $a,b,c\in R$.
Then $a(bc)x\in Q$ and so either $abc\in Q$ or $ax\in \sqrt{Q}$ or $%
(bc)^nx\in Q$. If $abc\in Q$, then either $ab\in Q\subseteq{(Q:_Rx)}$ or $%
ac\in\sqrt{Q}\subseteq\sqrt{(Q:_Rx)}$ or $(bc)^n\in Q\subseteq{(Q:_Rx)}$. If 
$ax\in \sqrt{Q}$, then $ac\in(\sqrt{Q}:_Rx)=\sqrt{(Q:_Rx)}$ by part (1). In
the third case we have $(bc)^n\in(Q:_Rx)$. Hence $(Q:_Rx)$ is a uniformly
2-absorbing primary ideal of $R$ with 2-$ord((Q:_Rx))\leq n$ .
\end{proof}

\begin{proposition}
Let $I$ be a proper ideal of ring $R$.

\begin{enumerate}
\item $\sqrt{I}$ is a 2-absorbing ideal of $R$.

\item For every $a,b,c\in R$, $abc\in I$ implies that $ab\in \sqrt{I}$ or $%
ac\in \sqrt{I}$ or $bc\in \sqrt{I}$;

\item $\sqrt{I}$ is a 2-absorbing primary ideal of $R$;

\item $\sqrt{I}$ is a Noether 2-absorbing primary ideal of $R$ $($2-$%
\mathfrak{e}(\sqrt{I})=1)$;

\item $\sqrt{I}$ is a uniformly 2-absorbing primary ideal of $R$.
\end{enumerate}
\end{proposition}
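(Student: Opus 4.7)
The plan is to reduce everything to the 2-absorbing condition on $\sqrt{I}$, exploiting the idempotence $\sqrt{\sqrt{I}} = \sqrt{I}$. I would establish the equivalences via the cycle $(1) \Leftrightarrow (2)$, $(1) \Leftrightarrow (3) \Leftrightarrow (4)$, and $(4) \Rightarrow (5) \Rightarrow (3)$, using Propositions \ref{uni-abs} and \ref{noe-uni} to close the loop.

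For $(1) \Rightarrow (2)$: since $abc \in I$ forces $abc \in \sqrt{I}$, applying the 2-absorbing property of $\sqrt{I}$ immediately yields one of $ab, ac, bc \in \sqrt{I}$. For the converse $(2) \Rightarrow (1)$, I take $abc \in \sqrt{I}$ and choose $n$ with $(abc)^n = a^n b^n c^n \in I$, then apply hypothesis $(2)$ to the triple $(a^n, b^n, c^n)$. One of the three products $a^n b^n$, $a^n c^n$, $b^n c^n$ must lie in $\sqrt{I}$; since $\sqrt{I}$ is radical, extracting $n$-th roots gives $ab \in \sqrt{I}$ or $ac \in \sqrt{I}$ or $bc \in \sqrt{I}$, as required.

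For $(1) \Leftrightarrow (3)$ and $(3) \Leftrightarrow (4)$, the key observation is that $\sqrt{\sqrt{I}} = \sqrt{I}$, so the defining condition for $\sqrt{I}$ being 2-absorbing primary (namely $abc \in \sqrt{I}$ implies $ab \in \sqrt{I}$ or $ac \in \sqrt{\sqrt{I}}$ or $bc \in \sqrt{\sqrt{I}}$) collapses verbatim to the 2-absorbing condition on $\sqrt{I}$. Likewise, the Noether exponent condition $(\sqrt{\sqrt{I}})^{1} \subseteq \sqrt{I}$ reads $\sqrt{I} \subseteq \sqrt{I}$, which holds trivially, so $(4)$ is just $(3)$ equipped with a tautological witness for the exponent.

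Finally, $(4) \Rightarrow (5)$ is an instance of Proposition \ref{noe-uni}, and $(5) \Rightarrow (3)$ follows from Proposition \ref{uni-abs}, completing the cycle. The only genuine computation is the radical-extraction argument used in $(2) \Rightarrow (1)$; all remaining implications are either tautological consequences of $\sqrt{\sqrt{I}} = \sqrt{I}$ or direct appeals to earlier results, so I do not anticipate any substantive obstacle.
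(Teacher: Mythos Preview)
Your proposal is correct and follows essentially the same route as the paper: the paper proves (1)$\Rightarrow$(2) as trivial, (2)$\Rightarrow$(1) via the identical power-and-radical-extraction argument you describe, declares (1)$\Leftrightarrow$(3) and (3)$\Rightarrow$(4) clear (precisely your $\sqrt{\sqrt{I}}=\sqrt{I}$ observation), and closes with (4)$\Rightarrow$(5) by Proposition~\ref{noeuni} and (5)$\Rightarrow$(3) as easy. The only cosmetic difference is that you spell out the tautologies behind (1)$\Leftrightarrow$(3)$\Leftrightarrow$(4) more explicitly and invoke Proposition~\ref{uni-abs} by name for (5)$\Rightarrow$(3).
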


\begin{proof}
(1)$\Rightarrow $(2) Is trivial.\newline
(2)$\Rightarrow $(1) Let $xyz\in \sqrt{I}$ for some $x,y,z\in R$. Then there
exists a positive integer $m$ such that $x^{m}y^{m}z^{m}\in I$. So, the
hypothesis in (2) implies that $x^{m}y^{m}\in \sqrt{I}$ or $x^{m}z^{m}\in 
\sqrt{I}$ or $y^{m}z^{m}\in \sqrt{I}$. Hence $xy\in \sqrt{I}$ or $xz\in 
\sqrt{I}$ or $yz\in \sqrt{I}$ which shows that $\sqrt{I}$ is a 2-absorbing
ideal.\newline
(1)$\Leftrightarrow $(3) and (3)$\Rightarrow $(4) are clear. \newline
(4)$\Rightarrow $(5) By Proposition \ref{noeuni}.\newline
(5)$\Rightarrow $(3) Is easy.
\end{proof}



\begin{proposition}
If $Q_1$ is a uniformly $P$-primary ideal of $R$ and $Q_2$ is a uniformly $P$%
-2-absorbing primary ideal of $R$ such that $Q_{1}\subseteq Q_{2}$, then $%
\mbox{2-}ord(Q_{2})\leq ord(Q_{1})$.
\end{proposition}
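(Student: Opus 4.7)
The plan is to take $n = ord(Q_1)$ and show directly that $n$ witnesses the uniformly 2-absorbing primary property for $Q_2$; since $\mbox{2-}ord(Q_2)$ is the smallest such integer, the inequality will follow.

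So suppose $a,b,c \in R$ satisfy $abc \in Q_2$, $ab \notin Q_2$, and $ac \notin \sqrt{Q_2} = P$. The first step is to use the fact (Proposition \ref{uni-abs}) that $Q_2$ is in particular a $P$-2-absorbing primary ideal: from $abc \in Q_2$ together with the exclusions $ab \notin Q_2$ and $ac \notin P$, we force $bc \in \sqrt{Q_2} = P = \sqrt{Q_1}$.

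The key step is then to promote ``$bc \in \sqrt{Q_1}$'' to ``$(bc)^n \in Q_1$'' using the uniformly primary hypothesis on $Q_1$. Let $m \geq 1$ be the smallest positive integer with $(bc)^m \in Q_1$; such $m$ exists since $bc \in \sqrt{Q_1}$. If $m = 1$, then $bc \in Q_1$ and so $(bc)^n \in Q_1$ trivially. If $m \geq 2$, write $(bc)^m = (bc)^{m-1}\cdot bc \in Q_1$ with $(bc)^{m-1} \notin Q_1$ by minimality; since $Q_1$ is uniformly primary with $ord(Q_1) = n$, applying the defining condition gives $(bc)^n \in Q_1$. In either case $(bc)^n \in Q_1 \subseteq Q_2$, which is exactly what we need.

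There is no real obstacle here, so I do not anticipate any technical difficulty: the argument is just a two-step reduction (first using 2-absorbing primariness of $Q_2$ to land $bc$ in the common radical, then using uniform primariness of $Q_1$ to control the exponent). The one place to be a little careful is the minimality argument producing $(bc)^{m-1} \notin Q_1$ when $m \geq 2$; one must handle the $m=1$ case separately, as done above.
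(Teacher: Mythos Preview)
Your proof is correct and follows essentially the same route as the paper: both arguments use the 2-absorbing primary property of $Q_2$ to force $bc\in\sqrt{Q_2}=\sqrt{Q_1}$, and then the uniform primariness of $Q_1$ to conclude $(bc)^{ord(Q_1)}\in Q_1\subseteq Q_2$. The only cosmetic difference is that the paper invokes \cite[Proposition~8]{CH} for the implication ``$x\in\sqrt{Q_1}\Rightarrow x^{ord(Q_1)}\in Q_1$'', whereas you reprove this inline via your minimality argument on $m$; the paper also phrases the conclusion by picking witnesses to the minimality of $\mbox{2-}ord(Q_2)$ rather than directly verifying that $ord(Q_1)$ works, but the content is identical.
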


\begin{proof}
Let $ord(Q_{1})=m$ and $\mbox{2-}ord(Q_{2})=n$. Then there are $a,b,c\in R$
such that $abc\in Q_{2}$, $ab\notin Q_{2}$, $ac\notin\sqrt{Q_{2}}$ and $%
(bc)^n\in Q_2$ but $(bc)^{n-1}\notin Q_2$. Thus $bc\in\sqrt{Q_{2}}=\sqrt{Q_1}
$. Hence $(bc)^{m}\in Q_{1}\subseteq Q_{2}$ by {\cite[Proposition 8]{CH}}.
Therefore, $n>m-1$ and so $n\geq m$.
\end{proof}

\begin{theorem}
Let $R$ be a ring and $\{Q_{i}\}_{i\in I}$ be a chain of uniformly $P$%
-2-absorbing primary ideals such that $\max_{i\in I}\{\mbox{2-}%
ord(Q_{i})\}=n,$ where $n$ is a positive integer. Then $Q=\bigcap\limits_{i%
\in I}Q_{i}$ is a uniformly $P$-2-absorbing primary ideal of $R$ with 2-$%
ord(Q)\leq n$.
\end{theorem}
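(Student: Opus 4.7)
The plan is to verify both the radical identity $\sqrt{Q}=P$ and the defining uniform 2-absorbing primary property of $Q$ with witness exponent $n$. The inclusion $\sqrt{Q}\subseteq\bigcap_{i\in I}\sqrt{Q_i}=P$ is automatic, so the substantive tasks are the reverse inclusion $P\subseteq\sqrt{Q}$ and the 2-absorbing condition itself.

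The 2-absorbing argument is the heart of the proof and leans on the chain structure. Suppose $a,b,c\in R$ satisfy $abc\in Q$, $ab\notin Q$, and $ac\notin\sqrt{Q}$. Since $ab\notin\bigcap_{i\in I}Q_i$, I pick $i_0\in I$ with $ab\notin Q_{i_0}$. Granted $\sqrt{Q}=P$, the hypothesis $ac\notin\sqrt{Q}$ gives $ac\notin P=\sqrt{Q_{i_0}}$, so the uniform 2-absorbing primary property of $Q_{i_0}$ together with $\mbox{2-}ord(Q_{i_0})\leq n$ yields $(bc)^n\in Q_{i_0}$. To extend this to an arbitrary $Q_j$ in the chain, totality gives two cases: if $Q_{i_0}\subseteq Q_j$ then $(bc)^n\in Q_{i_0}\subseteq Q_j$ for free; while if $Q_j\subseteq Q_{i_0}$ then $ab\notin Q_j$ (since $ab\in Q_j\subseteq Q_{i_0}$ would contradict the choice of $i_0$) and $ac\notin P=\sqrt{Q_j}$, so the uniform property of $Q_j$ with $\mbox{2-}ord(Q_j)\leq n$ yields $(bc)^n\in Q_j$ by the same calculation. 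Intersecting over $j\in I$ gives $(bc)^n\in Q$, proving the defining property with the advertised bound.

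The main obstacle is the verification $P\subseteq\sqrt{Q}$, which legitimises the substitution of $P$ for $\sqrt{Q}$ in the step above. For uniformly primary ideals the analogous fact is \cite[Proposition 8]{CH}, which supplies a uniform exponent $N\geq ord(Q_i)$ with $x^N\in Q_i$ for every $x\in P$; choosing $N=n$ then gives $P^{[n]}\subseteq\bigcap_{i\in I}Q_i=Q$. The direct analogue fails for uniformly 2-absorbing primary ideals, because every factorization $x^m=x\cdot x\cdot x^{m-2}$ of a power of $x\in P$ forces $ac=x^{m-1}\in P=\sqrt{Q_i}$, rendering the defining disjunction vacuous. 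I would try to compensate by using the chain hypothesis to compare the minimal exponents $m_i=\min\{k:x^k\in Q_i\}$ across comparable members of the chain, or by choosing an index $i_0$ with $x^{m_{i_0}}\in Q_{i_0}$ and propagating to the smaller $Q_j$'s via a factorization that involves a witness outside $P$; establishing this uniform bound is, in my view, the delicate technical point of the argument.
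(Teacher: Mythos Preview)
Your argument for the defining property is correct and is essentially the paper's. The only organizational difference is that the paper argues by contrapositive: assuming $ab\notin Q$ and $(bc)^n\notin Q$, it uses the chain to locate a \emph{single} index $k$ with $ab\notin Q_k$ and $(bc)^n\notin Q_k$ simultaneously (take the smaller of two witnessing ideals), and then concludes $ac\in\sqrt{Q_k}=P=\sqrt{Q}$. Your version fixes $i_0$ with $ab\notin Q_{i_0}$ and treats each $Q_j$ by a two-case split; both work, and both consume the identity $\sqrt{Q}=P$ at exactly the same point.

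On that identity you are right to be uneasy, and the paper is wrong to dismiss it as ``clear''. The paper simply writes $\sqrt{Q}=\bigcap_{i\in I}\sqrt{Q_i}=P$ with no further comment, but for an infinite intersection only $\sqrt{Q}\subseteq\bigcap_i\sqrt{Q_i}$ is automatic, and the reverse inclusion can fail under the stated hypotheses. Take $R=k[X]$ with $k$ a field and $Q_i=(X^i)$ for $i\geq1$: each $Q_i$ is $(X)$-primary, hence by Proposition~\ref{prop1}(2) a uniformly $(X)$-2-absorbing primary ideal with $\mbox{2-}ord(Q_i)=1$, so the hypotheses hold with $P=(X)$ and $n=1$; yet $Q=\bigcap_i(X^i)=0$ has $\sqrt{Q}=0\neq(X)$. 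So the ``delicate technical point'' you isolated is not a gap in your write-up to be patched but a defect in the theorem as stated: the $P$-part of the conclusion does not follow from the hypotheses. Since both your argument and the paper's use $\sqrt{Q}=P$ to pass between $\sqrt{Q}$ and the $\sqrt{Q_i}$ (you to go from $ac\notin\sqrt{Q}$ to $ac\notin\sqrt{Q_{i_0}}$, the paper to go from $ac\in\sqrt{Q_k}$ back to $ac\in\sqrt{Q}$), neither proof survives without it.
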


\begin{proof}
It is clear that $\sqrt{Q}=\bigcap\limits_{i\in I}\sqrt{Q_{i}}=P$. Let $%
a,b,c\in R$ such that $abc\in Q,$ $ab\notin Q$ and $(bc)^n\notin Q$. Since $%
\{Q_{i}\}_{i\in I}$ is a chain, there exists some $k\in I$ such that $%
ab\notin Q_k$ and $(bc)^n\notin Q_k$. On the other hand $Q_k$ is uniformly
2-absorbing primary with 2-$ord(Q_k)\leq n$, thus $ac\in\sqrt{Q_i}=\sqrt{Q}$%
, and so $Q$ is a uniformly 2-absorbing primary ideal of $R$ with 2-$%
ord(Q)\leq n$.
\end{proof}

In the following remark, we show that if $Q_{1}$ and $Q_{2}$ are uniformly
2-absorbing primary ideals of $R$, then $Q_{1}\cap Q_{2}$ need not be a
uniformly 2-absorbing primary ideal of $R$.

\begin{remark}
Let $p,q,r$
be distinct prime numbers. Then $p^{2}q\mathbb{Z}$ and $r\mathbb{Z}$ are uniformly 2-absorbing primary ideals of $\mathbb{Z}$.
Notice that $p^{2}qr\in p^{2}q\mathbb{Z}\cap r\mathbb{Z}$ and neither $p^{2}q\in p^{2}q%
\mathbb{Z}\cap r\mathbb{Z}$ nor $p^2r\in \sqrt{p^{2}q\mathbb{Z}\cap r\mathbb{Z}}=p\mathbb{Z}\cap q\mathbb{Z}\cap r\mathbb{Z}$
nor $qr\in \sqrt{p^{2}q\mathbb{Z}\cap r\mathbb{Z}}=p\mathbb{Z}\cap q\mathbb{Z}\cap r\mathbb{Z}$. Hence $p^{2}q\mathbb{Z}\cap r\mathbb{Z}$
is not a 2-absorbing primary ideal of $\mathbb{Z}$ which shows that it is not a uniformly 2-absorbing primary ideal of $\mathbb{Z}$.
\end{remark}


\begin{theorem}\label{intersection} 
Let $Q_1,~Q_2$ be uniformly primary ideals
of ring $R$.

\begin{enumerate}
\item $Q_1\cap Q_2$ is a uniformly 2-absorbing primary ideal of $R$ with 2-$%
ord(Q_1\cap Q_2)\leq\mbox{max}\{ord(Q_1),ord(Q_2)\}$.

\item $Q_1Q_2$ is a uniformly 2-absorbing primary ideal of $R$ with 2-$%
ord(Q_1 Q_2)\leq ord(Q_1)+ord(Q_2)$.
\end{enumerate}
\end{theorem}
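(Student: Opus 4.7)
The plan is to reduce both parts to a straightforward combination of two facts already available in the paper: that the intersection and product of two primary ideals are 2-absorbing primary (this is \cite[Theorem 2.4]{BTY}, the same reference used in Proposition 2.3 just above), and that if $Q$ is a uniformly primary ideal with $ord(Q)=m$, then $x^{m}\in Q$ for every $x\in\sqrt{Q}$ (this is \cite[Proposition 8]{CH}, cited in the proposition immediately preceding this theorem). Once these are in hand, the two desired order bounds fall out of the same calculation.

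Write $n_{i}=ord(Q_{i})$, and note the standard computation $\sqrt{Q_{1}\cap Q_{2}}=\sqrt{Q_{1}Q_{2}}=\sqrt{Q_{1}}\cap\sqrt{Q_{2}}$ (the second equality because $Q_{1}Q_{2}\subseteq Q_{1}\cap Q_{2}$ and $(Q_{1}\cap Q_{2})^{2}\subseteq Q_{1}Q_{2}$ at the level of elements of a common power). Fix either $Q=Q_{1}\cap Q_{2}$ (for part (1)) or $Q=Q_{1}Q_{2}$ (for part (2)), and suppose $a,b,c\in R$ satisfy $abc\in Q$, $ab\notin Q$, and $ac\notin\sqrt{Q}$. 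Since $Q$ is 2-absorbing primary by the first fact, I obtain $bc\in\sqrt{Q}=\sqrt{Q_{1}}\cap\sqrt{Q_{2}}$. Applying the second fact to each $Q_{i}$ separately then gives $(bc)^{n_{i}}\in Q_{i}$ for $i=1,2$.

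For part (1) I take $n=\max\{n_{1},n_{2}\}$, so that $(bc)^{n}\in Q_{i}$ for both $i$ and therefore $(bc)^{n}\in Q_{1}\cap Q_{2}$. For part (2) I use the factorization $(bc)^{n_{1}+n_{2}}=(bc)^{n_{1}}(bc)^{n_{2}}\in Q_{1}\cdot Q_{2}=Q_{1}Q_{2}$. Both required order bounds follow at once. There is no substantive obstacle here: the hard work was already carried out by Badawi--Tekir--Yetkin and by Cox--Hetzel, and the proof amounts to reassembling their results in the 2-absorbing context.
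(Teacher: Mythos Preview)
Your proof is correct and follows essentially the same approach as the paper: invoke \cite[Theorem 2.4]{BTY} to get that $Q_1\cap Q_2$ and $Q_1Q_2$ are 2-absorbing primary, conclude $bc\in\sqrt{Q_1}\cap\sqrt{Q_2}$, and then apply \cite[Proposition 8]{CH} to obtain the stated order bounds. You even spell out the factorization $(bc)^{n_1+n_2}=(bc)^{n_1}(bc)^{n_2}\in Q_1Q_2$ for part~(2), which the paper leaves as ``similar to (1)''.
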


\begin{proof}
(1) Let $Q_1,~Q_2$ be uniformly primary. Set $n=\mbox{max}%
\{ord(Q_1),ord(Q_2)\}$. Assume that for some $a,b,c\in R$, $abc\in Q_1\cap
Q_2$, $ab\notin{Q_1\cap Q_2}$ and $ac\notin\sqrt{Q_1\cap Q_2}$. Since $Q_1$
and $Q_2$ are primary ideals of $R$, then $Q_1\cap Q_2$ is 2-absorbing
primary by \textrm{\cite[Theorem 2.4]{BTY}}. Therefore $bc\in\sqrt{Q_1\cap
Q_2}=\sqrt{Q_1}\cap\sqrt{Q_2}$. By \textrm{\cite[Proposition 8]{CH}} we have
that $(bc)^{ord(Q_1)}\in Q_1$ and $(bc)^{ord(Q_2)}\in Q_2$. Hence $(bc)^n\in
Q_1\cap Q_2$ which shows that $Q_1\cap Q_2$ is uniformly 2-absorbing primary
and 2-$ord(Q_1\cap Q_2)\leq n$.\newline
(2) Similar to the proof in (1).
\end{proof}

We recall from \cite{Hoc}, if $R$ is an integral domain and $P$ is a prime ideal of $R$ that can be generated by a
regular sequence of $R$. Then, for each positive integer $n$, the ideal $P^n$
is a $P$-primary ideal of $R$.

\begin{lemma}
$(${\cite[Corollary 4]{CH}}$)$\label{lemch} Let $R$ be a ring and $P$ be a
prime ideal of $R$. If $P^n$ is a $P$-primary ideal of $R$ for some positive
integer $n$, then $P^n$ is a uniformly primary ideal of $R$ with $%
ord(P^n)\leq n$.
\end{lemma}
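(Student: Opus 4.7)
The plan is to unpack the definition of a uniformly primary ideal directly and apply the hypothesis that $P^n$ is $P$-primary. Recall that a proper ideal $Q$ is uniformly primary with $ord(Q) \le n$ provided that whenever $r,s \in R$ satisfy $rs \in Q$ and $r \notin Q$, we have $s^n \in Q$. So, assuming $P^n$ is $P$-primary, I need to verify this condition with the fixed positive integer $n$.

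I would begin by taking arbitrary $a, b \in R$ with $ab \in P^n$ and $a \notin P^n$, and show that $b^n \in P^n$. Since $P^n$ is a primary ideal of $R$ and $\sqrt{P^n} = P$ (this equality comes from the assumption that $P^n$ is $P$-primary), the standard primary dichotomy applied to $ab \in P^n$ with $a \notin P^n$ forces $b \in \sqrt{P^n} = P$. Raising this containment to the $n$-th power gives $b^n \in P^n$, which is exactly what is needed. Hence $P^n$ is uniformly primary with $ord(P^n) \le n$.

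There is essentially no obstacle here: the entire content is the observation that $\sqrt{P^n} = P$ together with the definition of a primary ideal. The only minor point worth emphasizing in the writeup is why one may conclude $b^n \in P^n$ from $b \in P$, namely that $P^n$ is generated by products of $n$ elements of $P$, so any $n$-th power of an element of $P$ lies in $P^n$. No induction on $n$, no Noetherian hypothesis, and no computation with the order is required; the bound $ord(P^n) \le n$ is witnessed by this single direct argument.
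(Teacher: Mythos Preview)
Your argument is correct. The paper does not supply its own proof of this lemma; it simply quotes the result as \cite[Corollary~4]{CH} and moves on. Your direct verification---using that $P^n$ is $P$-primary to force $b\in\sqrt{P^n}=P$ whenever $ab\in P^n$ and $a\notin P^n$, and then observing $b^n\in P^n$---is exactly the expected one-line proof and is presumably what appears in the cited source.
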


\begin{corollary}
Let $R$ be a ring and $P_1,~P_2$ be prime ideals of $R$. If $P_1^n$ is a $%
P_1 $-primary ideal of $R$ for some positive integer $n$ and $P_2^m$ is a $%
P_2$-primary ideal of $R$ for some positive integer $m$, then $P_1^nP_2^m$
and $P_1^n\cap P_2^m$ are uniformly 2-absorbing primary ideals of $R$ with 2-%
$ord(P_1^nP_2^m)\leq n+m$ and 2-$ord(P_1^n\cap P_2^m)\leq max\{n,m\}$.
\end{corollary}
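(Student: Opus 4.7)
The plan is essentially to assemble the corollary directly from the two preceding ingredients, namely Lemma \ref{lemch} and Theorem \ref{intersection}, since the hypotheses of the corollary are tailored exactly to match them. No new argument is needed beyond combining these tools and tracking the order bounds.

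First I would invoke Lemma \ref{lemch} twice: applied to the prime $P_1$ with exponent $n$ it gives that $P_1^n$ is a uniformly primary ideal of $R$ with $ord(P_1^n)\leq n$; applied to $P_2$ with exponent $m$ it gives that $P_2^m$ is a uniformly primary ideal with $ord(P_2^m)\leq m$. At this point the hypotheses of Theorem \ref{intersection} are satisfied with $Q_1=P_1^n$ and $Q_2=P_2^m$.

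Next I would feed these two uniformly primary ideals into Theorem \ref{intersection}. Part (1) of that theorem yields that $P_1^n\cap P_2^m$ is a uniformly 2-absorbing primary ideal with
\[
\mbox{2-}ord(P_1^n\cap P_2^m)\leq \max\{ord(P_1^n),ord(P_2^m)\}\leq \max\{n,m\},
\]
and part (2) yields that $P_1^n P_2^m$ is a uniformly 2-absorbing primary ideal with
\[
\mbox{2-}ord(P_1^n P_2^m)\leq ord(P_1^n)+ord(P_2^m)\leq n+m.
\]
This is exactly the claimed conclusion.

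There is no genuine obstacle here; the only thing worth flagging is that the bounds stated in the corollary are obtained by composing two inequalities (the bound from Lemma \ref{lemch} and the bound from Theorem \ref{intersection}), so one should be careful to write ``$\leq n+m$'' and ``$\leq \max\{n,m\}$'' rather than equalities. Apart from that, the proof is a one-line citation of the two previous results.
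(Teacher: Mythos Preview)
Your proposal is correct and matches the paper's own proof exactly: the paper simply writes ``By Theorem \ref{intersection} and Lemma \ref{lemch},'' and you have spelled out precisely how those two results combine. Your remark about composing the two inequalities to get the stated bounds is the only nontrivial bookkeeping, and you handled it correctly.
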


\begin{proof}
By Theorem \ref{intersection} and Lemma \ref{lemch}.
\end{proof}

\begin{proposition}\label{monoepi} 
Let $f:R\longrightarrow R^{\prime }$ be a homomorphism of
commutative rings. Then the following statements hold:

\begin{enumerate}
\item If $Q^{\prime }$ is a uniformly 2-absorbing primary ideal of $%
R^{\prime }$, then $f^{-1}(Q^{\prime })$ is a uniformly 2-absorbing primary
ideal of $R$ with $\mbox{2-}ord_{R}(f^{-1}(Q^{\prime }))\leq \mbox{2-}%
ord_{R^{\prime }}(Q^{\prime })$.

\item If $f$ is an epimorphism and $Q$ is a uniformly 2-absorbing primary
ideal of $R$ containing $\ker (f)$, then $f(Q)$ is a uniformly 2-absorbing
primary ideal of $R^{\prime }$ with $\mbox{2-}ord_{R^{\prime }}(f(Q))\leq\mbox{2-}ord_{R}(Q) $.
\end{enumerate}
\end{proposition}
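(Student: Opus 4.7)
The plan is to verify the definition directly in each part, with the key tool being the standard radical identities: for any homomorphism $f\colon R\to R'$ and any ideal $J'$ of $R'$ one has $\sqrt{f^{-1}(J')}=f^{-1}(\sqrt{J'})$, and when $f$ is surjective with $\ker f\subseteq I$ one has $\sqrt{f(I)}=f(\sqrt{I})$. Throughout, let $n$ denote the relevant order: $n=\mbox{2-}ord_{R'}(Q')$ in part (1) and $n=\mbox{2-}ord_R(Q)$ in part (2).

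For part (1), I first note that $f^{-1}(Q')\neq R$ because $Q'\neq R'$. Then I take $a,b,c\in R$ with $abc\in f^{-1}(Q')$, $ab\notin f^{-1}(Q')$ and $ac\notin\sqrt{f^{-1}(Q')}$, and push everything forward through $f$ to obtain $f(a)f(b)f(c)\in Q'$, $f(a)f(b)\notin Q'$, and (via the pullback identity for the radical) $f(a)f(c)\notin\sqrt{Q'}$. The hypothesis on $Q'$ then yields $(f(b)f(c))^n=f((bc)^n)\in Q'$, i.e.\ $(bc)^n\in f^{-1}(Q')$, which simultaneously establishes the uniformly 2-absorbing primary property and the order bound.

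For part (2), I first check that $f(Q)$ is proper: if $1\in f(Q)$ then $1=f(x)$ for some $x\in Q$, and $1-x\in\ker f\subseteq Q$ forces $1\in Q$, contradicting properness of $Q$. Next, given $a',b',c'\in R'$ with $a'b'c'\in f(Q)$, $a'b'\notin f(Q)$ and $a'c'\notin\sqrt{f(Q)}$, I use surjectivity to choose lifts $a,b,c\in R$. Because $\ker f\subseteq Q$, the identity $f^{-1}(f(Q))=Q$ holds, so $abc\in Q$ and $ab\notin Q$. Were $ac\in\sqrt{Q}$, then some power $(ac)^m$ would lie in $Q$ and its image $(a'c')^m$ would lie in $f(Q)$, contradicting $a'c'\notin\sqrt{f(Q)}$. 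Applying the uniformly 2-absorbing primary property of $Q$ at the lifted triple then gives $(bc)^n\in Q$, hence $(b'c')^n=f((bc)^n)\in f(Q)$, delivering both claims.

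The only delicate point, though still routine, is the use of the radical identity $\sqrt{f(Q)}=f(\sqrt{Q})$ in (2); this is exactly where the assumptions that $f$ is surjective and that $\ker f\subseteq Q$ are essential, since otherwise one cannot transfer the non-radical condition on $a'c'$ back to a non-radical condition on $ac$. Once those identities are in hand, both parts are just a matter of pushing and pulling memberships across $f$.
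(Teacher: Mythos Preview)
Your proof is correct and follows essentially the same route as the paper: in (1) you push $a,b,c$ forward through $f$, invoke $\sqrt{f^{-1}(Q')}=f^{-1}(\sqrt{Q'})$, and pull back; in (2) you lift via surjectivity, use $\ker f\subseteq Q$ to get $f^{-1}(f(Q))=Q$, and transfer the radical condition back via $f(\sqrt{Q})\subseteq\sqrt{f(Q)}$ (the paper phrases this last step identically). Your version is in fact slightly more careful than the paper's in that you explicitly verify properness of $f^{-1}(Q')$ and $f(Q)$; one small quibble is that in your closing paragraph the hypotheses ``$f$ surjective with $\ker f\subseteq Q$'' are really needed for $f^{-1}(f(Q))=Q$, not for the radical inclusion $f(\sqrt{Q})\subseteq\sqrt{f(Q)}$, which holds for any homomorphism.
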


\begin{proof}
(1) Set $N=\mbox{2-}ord_{R^{\prime }}(Q^{\prime })$. Let $a,b,c\in R$ such that $abc\in $ $f^{-1}(Q^{\prime })$, $ab\notin $ $%
f^{-1}(Q^{\prime })$ and $ac\notin\sqrt{f^{-1}(Q^{\prime })}=f^{-1}(\sqrt{Q^{\prime }})$. 
Then $f(abc)=f(a)f(b)f(c)\in Q^{\prime }$, $f(ab)=f(a)f(b)
\notin Q^{\prime }$ and  $f(ac)=f(a)f(c)
\notin \sqrt{Q^{\prime }}$. Since $Q^{\prime }$ is
a uniformly 2-absorbing primary ideal of $R^{\prime }$, then $%
f^{N}(bc)\in Q^{\prime }.$ Then $f((bc)^{N})\in Q^{\prime }$ and so $(bc)^{N}\in f^{-1}(Q^{\prime })
$. Thus $f^{-1}(Q^{\prime })$ is a uniformly 2-absorbing primary ideal of $R$ with $\mbox{2-}ord_{R}(f^{-1}(Q^{\prime }))\leq N=\mbox{2-}%
ord_{R^{\prime }}(Q^{\prime })$.

(2) Set $N=\mbox{2-}ord_{R}(Q)$. Let $a,b,c\in R^{\prime }$ such that $abc\in $ $f(Q)$, $ab\notin $ $f(Q)$
and $ac\notin\sqrt{f(Q)}$. Since $f$ is an
epimorphism, then there exist $x,y,z\in R$ such that $f(x)=a$, $f(y)=b$ and $%
f(z)=c.$ Then $f(xyz)=abc$ $\in $ $f(Q)$, $f(xy)=ab\notin $ $f(Q)$ and $%
f(xz)=ac\notin\sqrt{f(Q)}$. Since $\ker (f)\subseteq Q$, then $xyz\in
Q$. Also $xy\notin Q$, and $xz\notin\sqrt{Q}$, since $f(\sqrt{Q})\subseteq\sqrt{f(Q)}$. Then $%
(yz)^{N}\in Q$ since $Q$ is a uniformly 2-absorbing primary ideal of $%
R$. Thus $f((yz)^{N})=(f(y)f(z))^{N}=(bc)^{N}\in f(Q)$.
Therefore, $f(Q)$ is a uniformly 2-absorbing primary ideal of $R^{\prime }.$
Moreover $\mbox{2-}ord_{R^{\prime }}(f(Q))\leq N=\mbox{2-}ord_{R}(Q) $.
\end{proof}

As an immediate consequence of Proposition \ref{monoepi} we have the
following result:

\begin{corollary}
\label{frac} Let $R$ be a ring and $Q$ be an ideal of $R$.

\begin{enumerate}
\item If $R^\prime$ is a subring of $R$ and $Q$ is a uniformly 2-absorbing
primary ideal of $R$, then $Q\cap R^\prime$ is a uniformly 2-absorbing
primary ideal of $R^\prime$ with $\mbox{2-}ord_{R^\prime}(Q\cap R^\prime)\leq \mbox{2-}ord_{R}(Q) $.

\item Let $I$ be an ideal of $R$ with $I\subseteq Q$. Then $Q$ is a
uniformly 2-absorbing primary ideal of $R$ if and only if $Q/I$ is a
uniformly 2-absorbing primary ideal of $R/I$.
\end{enumerate}
\end{corollary}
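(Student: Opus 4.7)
The plan is to derive both parts of the corollary directly from Proposition \ref{monoepi} by choosing the natural ring homomorphism dictated by the hypothesis.

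For part (1), I would take $f$ to be the inclusion homomorphism $\iota : R^{\prime} \hookrightarrow R$. The key observation is that $\iota^{-1}(Q) = Q \cap R^{\prime}$, so Proposition \ref{monoepi}(1) applied with $Q^{\prime} = Q$ immediately yields that $Q \cap R^{\prime}$ is a uniformly 2-absorbing primary ideal of $R^{\prime}$, together with the sharp bound $\mbox{2-}ord_{R^{\prime}}(Q \cap R^{\prime}) \leq \mbox{2-}ord_{R}(Q)$. The only thing to verify is that $Q \cap R^{\prime}$ is a proper ideal of $R^{\prime}$, which holds because $1 \notin Q$ forces $1 \notin Q \cap R^{\prime}$.

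For part (2), I would apply Proposition \ref{monoepi} to the canonical projection $\pi : R \twoheadrightarrow R/I$, whose kernel is exactly $I$ and which is an epimorphism. For the forward direction, the hypothesis $I \subseteq Q$ means $\ker(\pi) \subseteq Q$, so part (2) of Proposition \ref{monoepi} applies and gives that $\pi(Q) = Q/I$ is uniformly 2-absorbing primary in $R/I$. For the reverse direction, I would apply part (1) of Proposition \ref{monoepi} to $\pi$ and to the ideal $Q/I$ of $R/I$; since $\pi^{-1}(Q/I) = Q$ (using again that $I \subseteq Q$), this yields that $Q$ is uniformly 2-absorbing primary in $R$. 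Both implications also carry the order inequalities from Proposition \ref{monoepi}.

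There is no substantive obstacle, since the work has been done inside Proposition \ref{monoepi}; the proof is essentially bookkeeping. The only small point that requires attention is the bijective correspondence between ideals of $R/I$ and ideals of $R$ containing $I$, which ensures $\pi^{-1}(Q/I) = Q$ in the converse of part (2) and guarantees that the radical behaves as expected under $\pi$. Once this is noted, the statements follow in one line each from the corresponding parts of Proposition \ref{monoepi}.
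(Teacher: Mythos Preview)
Your proposal is correct and matches the paper's approach exactly: the paper states the corollary as an immediate consequence of Proposition \ref{monoepi}, and your argument simply makes explicit the choice of the inclusion $\iota:R'\hookrightarrow R$ for part (1) and the canonical projection $\pi:R\to R/I$ for part (2), which is precisely what is intended.
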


\begin{corollary}
Let $Q$ be an ideal of ring $R$. Then $\langle Q,X\rangle$ is a uniformly
2-absorbing primary ideal of $R[X]$ if and only if $Q$ is a uniformly
2-absorbing primary ideal of $R$.
\end{corollary}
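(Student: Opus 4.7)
The plan is to deduce both directions from Proposition~\ref{monoepi} applied to the evaluation epimorphism
\[
f\colon R[X]\longrightarrow R,\qquad p(X)\longmapsto p(0).
\]
The key observation is the identification $\langle Q,X\rangle = f^{-1}(Q)$: a polynomial $a_{0}+a_{1}X+\cdots+a_{m}X^{m}$ lies in $\langle Q,X\rangle$ if and only if $a_{0}\in Q$, which happens precisely when its image $a_{0}$ under $f$ lies in $Q$. Moreover $\ker(f)=\langle X\rangle\subseteq\langle Q,X\rangle$ and $f(\langle Q,X\rangle)=Q$, so both parts of Proposition~\ref{monoepi} are directly applicable.

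For the ``if'' direction, suppose $Q$ is a uniformly 2-absorbing primary ideal of $R$. Then Proposition~\ref{monoepi}(1) applied to $f$ yields that $f^{-1}(Q)=\langle Q,X\rangle$ is a uniformly 2-absorbing primary ideal of $R[X]$. For the ``only if'' direction, suppose $\langle Q,X\rangle$ is a uniformly 2-absorbing primary ideal of $R[X]$. Since $\ker(f)=\langle X\rangle\subseteq\langle Q,X\rangle$ and $f$ is surjective, Proposition~\ref{monoepi}(2) applied to the ideal $\langle Q,X\rangle$ shows that $f(\langle Q,X\rangle)=Q$ is a uniformly 2-absorbing primary ideal of $R$.

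No step here presents a real obstacle; the only thing to verify with care is the description $\langle Q,X\rangle=f^{-1}(Q)$ and that $\ker(f)\subseteq\langle Q,X\rangle$, both of which are immediate. Alternatively, one can phrase the same argument via Corollary~\ref{frac}(2) together with the isomorphism $R[X]/\langle X\rangle\cong R$ under which $\langle Q,X\rangle/\langle X\rangle$ corresponds to $Q$, obtaining the equivalence in one stroke.
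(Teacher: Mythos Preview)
Your proof is correct and essentially identical to the paper's approach: the paper invokes Corollary~\ref{frac}(2) together with the isomorphism $R[X]/\langle X\rangle\cong R$ sending $\langle Q,X\rangle/\langle X\rangle$ to $Q$, which is precisely the alternative you describe at the end, while your main argument via Proposition~\ref{monoepi} and the evaluation map is just the same reasoning with Corollary~\ref{frac}(2) unpacked.
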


\begin{proof}
By Corollary \ref{frac}(2) and regarding the isomorphism $\langle
Q,X\rangle/\langle X\rangle\simeq Q$ in $R[X]/\langle X\rangle\simeq R$ we
have the result.
\end{proof}

\begin{corollary}
Let $R$ be a ring, $Q$ a proper ideal of $R$ and $X=\{X_{i}\}_{i\in I}$ a
collection of indeterminates over $R$. If $QR[X]$ is a uniformly 2-absorbing
primary ideal of $R[X]$, then $Q$ is a uniformly 2-absorbing primary ideal
of $R$ with $\mbox{2-}ord_R(Q)\leq \mbox{2-}ord_{R[X]}(QR[X])$.
\end{corollary}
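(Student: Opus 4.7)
The plan is to deduce this directly from Corollary \ref{frac}(1) applied to the subring inclusion $R \hookrightarrow R[X]$. The only arithmetic content I actually need is the identification $QR[X] \cap R = Q$; once that is in hand, both the uniformly 2-absorbing primary property and the desired inequality on the orders follow immediately from the cited corollary.

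First I would verify $QR[X] \cap R = Q$. The inclusion $Q \subseteq QR[X] \cap R$ is immediate. For the reverse, I would note that every element of $QR[X]$ is (by definition of the extension ideal) a polynomial in the indeterminates $\{X_i\}_{i \in I}$ with all coefficients lying in $Q$. If such a polynomial happens to belong to $R$, it is a constant polynomial, and the constant in question is one of its coefficients, hence lies in $Q$. This settles $QR[X] \cap R \subseteq Q$.

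Next, I would invoke Corollary \ref{frac}(1) with the subring $R^{\prime} = R$ of the ring $R[X]$ and with the uniformly 2-absorbing primary ideal $QR[X]$ of $R[X]$. The corollary yields that $QR[X] \cap R$ is a uniformly 2-absorbing primary ideal of $R$ with $\mbox{2-}ord_R(QR[X] \cap R) \leq \mbox{2-}ord_{R[X]}(QR[X])$. Substituting the identification from the previous paragraph gives exactly the statement claimed.

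There is no genuine obstacle here; the only point that deserves any care is the coefficient-wise description of $QR[X]$ used to compute the contraction, and even this is standard. The whole argument reduces to combining a one-line lemma about extended ideals with the already-proved Corollary \ref{frac}(1).
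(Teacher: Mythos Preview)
Your proposal is correct and follows exactly the paper's approach: the paper's proof is the single line ``It is clear from Corollary~\ref{frac}(1),'' and you have simply spelled out the details, including the routine verification that $QR[X]\cap R=Q$.
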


\begin{proof}
It is clear from Corollary \ref{frac}(1).
\end{proof}

\begin{proposition}\label{multi}
Let $S$ be a multiplicatively closed subset of $R$ and $Q$ be a proper ideal
of $R$. Then the following conditions hold:
\begin{enumerate}
\item If $Q$ is a uniformly 2-absorbing primary ideal of $R$ such that $%
Q\cap S=\emptyset $, then $S^{-1}Q$ is a uniformly 2-absorbing primary ideal
of $S^{-1}R$ with $\mbox{2-}ord(S^{-1}Q)\leq \mbox{2-}ord(Q)$.

\item If $S^{-1}Q$ is a uniformly 2-absorbing primary ideal of $S^{-1}R$ and 
$S\cap Z_{Q}(R)=\emptyset $, then $Q$ is a uniformly 2-absorbing primary
ideal of $R$ with $\mbox{2-}ord(Q)\leq \mbox{2-}ord(S^{-1}Q).$
\end{enumerate}
\end{proposition}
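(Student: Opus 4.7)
My plan is to treat the two parts symmetrically: in each direction I would take an element triple witnessing the 2-absorbing primary condition in one ring and lift/push it into the other ring, then invoke the uniformly 2-absorbing primary hypothesis there. Throughout I will use the standard identity $\sqrt{S^{-1}Q}=S^{-1}\sqrt{Q}$ without further comment.

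For part (1), set $N=\mbox{2-}ord(Q)$. Given $\frac{a_1}{s_1},\frac{a_2}{s_2},\frac{a_3}{s_3}\in S^{-1}R$ whose product lies in $S^{-1}Q$, there exists $t\in S$ with $ta_1a_2a_3\in Q$. Then I would apply the uniformly 2-absorbing primary condition of $Q$ to the triple $(ta_1,a_2,a_3)$ in $R$, yielding one of $ta_1a_2\in Q$, $ta_1a_3\in\sqrt{Q}$, or $(a_2a_3)^N\in Q$. Dividing by the appropriate denominators translates these to $\frac{a_1a_2}{s_1s_2}\in S^{-1}Q$, $\frac{a_1a_3}{s_1s_3}\in\sqrt{S^{-1}Q}$, or $\bigl(\frac{a_2a_3}{s_2s_3}\bigr)^N\in S^{-1}Q$, which is exactly what we need. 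This also shows $\mbox{2-}ord(S^{-1}Q)\leq N$.

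For part (2), set $N=\mbox{2-}ord(S^{-1}Q)$ and suppose $a,b,c\in R$ satisfy $abc\in Q$, $ab\notin Q$, $ac\notin\sqrt{Q}$. I would first use the disjointness hypothesis $S\cap Z_Q(R)=\emptyset$ to verify that $\frac{ab}{1}\notin S^{-1}Q$ and $\frac{ac}{1}\notin\sqrt{S^{-1}Q}$. For the first, any relation $sab\in Q$ with $s\in S$ would place $s\in Z_Q(R)$. For the second, a relation $\frac{ac}{1}=\frac{q}{t}$ with $q\in\sqrt{Q}$, $t\in S$ produces some $u\in S$ and exponent $m$ with $(ut)^m(ac)^m\in Q$; since $ac\notin\sqrt{Q}$ forces $(ac)^m\notin Q$, and since elements outside $Z_Q(R)$ remain outside under powers (because $t\notin Z_Q(R)$ means $tr\in Q\Rightarrow r\in Q$, iterating gives the same for $t^m$), this again contradicts $S\cap Z_Q(R)=\emptyset$.

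Once these non-membership statements are in hand, the uniformly 2-absorbing primary property of $S^{-1}Q$ forces $\bigl(\frac{bc}{1}\bigr)^N\in S^{-1}Q$, so some $s\in S$ satisfies $s(bc)^N\in Q$. A final appeal to $S\cap Z_Q(R)=\emptyset$ (applied to $s$, using that $(bc)^N\notin Q$ would put $s$ into $Z_Q(R)$) yields $(bc)^N\in Q$, giving both that $Q$ is uniformly 2-absorbing primary and the order bound $\mbox{2-}ord(Q)\leq N$. The main obstacle is the verification that $\frac{ac}{1}\notin\sqrt{S^{-1}Q}$: the interaction between radicals, powers, and $Z_Q(R)$ is the only place where the argument is not entirely formal, and it is exactly the step where the hypothesis $S\cap Z_Q(R)=\emptyset$ is genuinely used.
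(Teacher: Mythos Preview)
Your proposal is correct and follows essentially the same route as the paper's proof. In part~(1) the paper assumes the two non-membership conditions in $S^{-1}R$ and deduces $uab\notin Q$, $uac\notin\sqrt{Q}$ before applying the hypothesis, whereas you argue the equivalent disjunctive form directly; in part~(2) the paper compresses your radical argument into the single claim that $S\cap Z_{\sqrt{Q}}(R)=\emptyset$ follows from $S\cap Z_{Q}(R)=\emptyset$, which is exactly what your power computation establishes. These are presentational differences only.
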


\begin{proof}
(1) Set $N:=\mbox{2-}ord(Q)$. Let $a,b,c\in R$ and $s,t,k\in S$ such that $\frac{a}{s}\frac{b}{t}\frac{c}{k%
}\in S^{-1}Q$, $\frac{a}{s}\frac{b}{t}\notin S^{-1}Q$, $\frac{a}{s}\frac{c}{%
k}\notin \sqrt{S^{-1}Q}=S^{-1}\sqrt{Q}$. Thus there is $u\in S$ such that $%
uabc\in Q$. By assumptions we have that $uab\notin Q$ and $uac\notin \sqrt{Q}$. 
Since $Q$ is a uniformly 2-absorbing primary ideal of $R$, then $(bc)^{N}\in Q$. Hence $(\frac{b}{t}\frac{c
}{k})^{N}\in S^{-1}Q$. Consequently, $S^{-1}Q$ is a uniformly 2-absorbing
primary ideal of $S^{-1}R$ and $\mbox{2-}ord(S^{-1}Q)\leq N= \mbox{2-}ord(Q)$.

(2) Set $N:=\mbox{2-}ord(S^{-1}Q)$. Let $a,b,c\in R$ such that $abc\in $ $Q$, $ab\notin $ $Q$ and $%
ac\notin\sqrt{Q}$. Then $\frac{abc}{1}=\frac{%
a}{1}\frac{b}{1}\frac{c}{1}\in S^{-1}Q$, $\frac{ab}{1}=\frac{a}{1}\frac{b}{1}%
\notin $ $S^{-1}Q$ and $\frac{ac}{1}=\frac{a}{1}\frac{c}{1}%
\notin  \sqrt{S^{-1}Q}=S^{-1}\sqrt{Q}$, because $S\cap
Z_{Q}(R)=\emptyset$ and $S\cap Z_{\sqrt{Q}}(R)=\emptyset$. Since $S^{-1}Q$ is a uniformly 2-absorbing primary
ideal of $S^{-1}R$, then $(\frac{b}{1}\frac{c}{1})^{N}=\frac{%
(bc)^{N}}{1}\in S^{-1}Q$. Then there exists $u\in S$ such that $%
u(bc)^{N}\in Q$. Hence $(bc)^{N}\in Q$ because $S\cap Z_{Q}(R)=\emptyset $. Thus $Q$ is a uniformly 2-absorbing
primary ideal of $R$ and $\mbox{2-}ord(Q)\leq N=\mbox{2-}ord(S^{-1}Q).$
\end{proof}

\begin{proposition}\label{abs-noe-uni}
Let $Q$ be a 2-absorbing primary ideal of ring $R$ and $P=\sqrt{Q}$ be a
finitely generated ideal of $R$. Then $Q$ is a Noether strongly 2-absorbing
primary ideal of $R$. Thus $Q$ is a uniformly 2-absorbing primary ideal of $%
R $.
\end{proposition}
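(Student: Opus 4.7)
The plan is to reduce the statement to the standard fact that whenever the radical of an ideal is finitely generated, some power of that radical sits inside the ideal; this immediately gives the ``Noether strongly'' condition, and then uniformity follows for free from Proposition~\ref{noe-uni}.

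Write $P=\sqrt{Q}=\langle x_{1},\ldots ,x_{k}\rangle$, and for each $i$ choose a positive integer $n_{i}$ with $x_{i}^{n_{i}}\in Q$, which exists because $x_{i}\in\sqrt{Q}$. Put $m=\max\{n_{1},\ldots ,n_{k}\}$ and $N=k(m-1)+1$. I would first observe that $P^{N}$ is generated as an ideal by monomials $x_{1}^{a_{1}}\cdots x_{k}^{a_{k}}$ with $a_{1}+\cdots +a_{k}=N$. Because $N>k(m-1)$, the pigeonhole principle forces some $a_{j}\geq m\geq n_{j}$, so $x_{j}^{n_{j}}$ divides the monomial; since $x_{j}^{n_{j}}\in Q$ and $Q$ is an ideal, the whole monomial lies in $Q$. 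Hence $P^{N}\subseteq Q$.

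Now the hypothesis that $Q$ is a $2$-absorbing primary ideal of $R$ with $\sqrt{Q}=P$ gives that $Q$ is a $P$-$2$-absorbing primary ideal, so combining this with the containment $P^{N}\subseteq Q$ we get, directly from the definition, that $Q$ is a Noether strongly $2$-absorbing primary ideal of $R$ (with $2\mbox{-}\mathfrak{e}(Q)\leq N$). The final assertion that $Q$ is uniformly $2$-absorbing primary then follows by invoking Proposition~\ref{noe-uni}, which also supplies the bound $2\mbox{-}ord(Q)\leq 2\mbox{-}\mathfrak{e}(Q)$.

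There is no real obstacle here: the $2$-absorbing primary property is used only to land inside the right definition, and the genuine content is the short pigeonhole argument showing $P^{N}\subseteq Q$, which is really a property of any proper ideal with finitely generated radical and does not interact with the $2$-absorbing structure at all.
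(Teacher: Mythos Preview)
Your argument is correct and follows the same approach as the paper: the paper's proof simply cites \cite[Lemma~8.21]{Sh} for the containment $P^{N}\subseteq Q$ and then invokes Proposition~\ref{noe-uni}, whereas you spell out the standard pigeonhole proof of that lemma explicitly. The content is identical; you have merely unpacked the black-box citation.
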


\begin{proof}
It is clear from \textrm{\cite[Lemma 8.21]{Sh}} and Proposition \ref{noe-uni}.
\end{proof}

\begin{corollary}
Let $R$ be a Noetherian ring and $Q$ a proper ideal of $R$. Then the
following conditions are equivalent:
\end{corollary}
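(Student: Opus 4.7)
The statement is a corollary collecting equivalences for Noetherian rings, and although the list of conditions is cut off in the excerpt, context makes clear which three properties are being compared: $Q$ being a 2-absorbing primary ideal, a Noether strongly 2-absorbing primary ideal, and a uniformly 2-absorbing primary ideal of $R$. The plan is to verify these three are equivalent by chaining together results already proven earlier in the paper.

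First I would establish the easy direction: every uniformly 2-absorbing primary ideal is 2-absorbing primary. This is already recorded as Proposition \ref{uni-abs}, so no work is required. Next I would note that every Noether strongly 2-absorbing primary ideal is uniformly 2-absorbing primary, which is Proposition \ref{noe-uni} (together with the bound $\mbox{2-}ord(Q) \leq \mbox{2-}\mathfrak{e}(Q)$). These two implications hold with no hypothesis on $R$.

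The remaining implication is the one that uses Noetherianness: if $Q$ is 2-absorbing primary, then $Q$ is Noether strongly 2-absorbing primary. Here I would invoke Proposition \ref{abs-noe-uni}, which says that whenever $P=\sqrt{Q}$ is finitely generated, a 2-absorbing primary $Q$ is automatically Noether strongly 2-absorbing primary. Since $R$ is Noetherian, every ideal of $R$, in particular $\sqrt{Q}$, is finitely generated, so the hypothesis of Proposition \ref{abs-noe-uni} is satisfied and the implication follows.

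There is no real obstacle here, since every implication reduces directly to a result stated above; the corollary is essentially a packaging of Propositions \ref{uni-abs}, \ref{noe-uni}, and \ref{abs-noe-uni} under the hypothesis that makes $\sqrt{Q}$ finitely generated. The only mild subtlety is remembering to cite Proposition \ref{abs-noe-uni} for the nontrivial direction and to observe explicitly that Noetherianness supplies the finite generation of $\sqrt{Q}$.
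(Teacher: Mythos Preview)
Your proposal is correct and follows essentially the same approach as the paper, which simply cites Propositions \ref{noe-uni} and \ref{abs-noe-uni}. You are slightly more thorough in also citing Proposition \ref{uni-abs} for the implication (1)$\Rightarrow$(3) and in making explicit that Noetherianness is used to guarantee $\sqrt{Q}$ is finitely generated, but the argument is the same.
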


\begin{enumerate}
\item $Q$ is a uniformly 2-absorbing primary ideal of $R;$

\item $Q$ is a Noether strongly 2-absorbing primary ideal of $R$;

\item $Q$ is a 2-absorbing primary ideal of $R$.
\end{enumerate}

\begin{proof}
Apply Proposition \ref{noe-uni} and Proposition \ref{abs-noe-uni}.
\end{proof}

We recall from \cite{Huk} the construction of idealization of a module. Let $%
R$ be a ring and $M$ be an $R$-module. Then $R(+)M = R \times M$ is a ring
with identity $(1, 0)$ under addition defined by $(r, m) + (s, n) = (r + s,
m + n)$ and multiplication defined by $(r, m)(s, n) = (rs, rn + sm)$. Note
that $\sqrt{I}(+)M=\sqrt{I(+)M}$.

\begin{proposition}
Let $R$ be a ring, $Q$ be a proper ideal of $R$ and $M$ be an $R$-module.
The following conditions are equivalent:

\begin{enumerate}
\item $Q(+)M$ is a uniformly 2-absorbing primary ideal of $R(+)M$;

\item $Q$ is a uniformly 2-absorbing primary ideal of $R$.
\end{enumerate}
\end{proposition}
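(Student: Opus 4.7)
The plan is to prove both implications directly by exploiting the structural fact that an element of $R(+)M$ carries its ``ring component'' in the first coordinate and its ``module component'' in the second, together with the identity $\sqrt{Q(+)M}=\sqrt{Q}(+)M$ recorded just before the statement. Each of the three conditions in the definition of a uniformly 2-absorbing primary ideal (membership in $Q(+)M$, nonmembership in $Q(+)M$, nonmembership in $\sqrt{Q(+)M}$) is controlled exclusively by the first coordinate, so the definition for $Q(+)M$ translates cleanly to the definition for $Q$ and vice versa.

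For (1)$\Rightarrow$(2), I would set $n=\mbox{2-}ord(Q(+)M)$ and take witnesses $a,b,c\in R$ with $abc\in Q$, $ab\notin Q$ and $ac\notin\sqrt{Q}$. Lifting to $(a,0),(b,0),(c,0)\in R(+)M$, their product is $(abc,0)\in Q(+)M$, while $(ab,0)\notin Q(+)M$ and $(ac,0)\notin\sqrt{Q}(+)M=\sqrt{Q(+)M}$. The hypothesis then gives $((b,0)(c,0))^n=((bc)^n,0)\in Q(+)M$, whence $(bc)^n\in Q$. This yields $\mbox{2-}ord(Q)\leq n$.

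For (2)$\Rightarrow$(1), I would set $n=\mbox{2-}ord(Q)$ and consider arbitrary $(a,m_1),(b,m_2),(c,m_3)\in R(+)M$ satisfying the three defining conditions for $Q(+)M$. Reading off first coordinates gives $abc\in Q$, $ab\notin Q$ and (via $\sqrt{Q(+)M}=\sqrt{Q}(+)M$) $ac\notin\sqrt{Q}$, hence $(bc)^n\in Q$. A quick induction on $n$ using the multiplication rule $(r,m)(r',m')=(rr',rm'+r'm)$ yields $(r,m)^n=(r^n,nr^{n-1}m)$, so $((b,m_2)(c,m_3))^n$ has first coordinate $(bc)^n\in Q$ and second coordinate automatically in $M$; thus it lies in $Q(+)M$, and $\mbox{2-}ord(Q(+)M)\leq n$.

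No step presents a genuine obstacle; the argument is a coordinate-chasing exercise. The only mild care required is the power formula $(r,m)^n=(r^n,nr^{n-1}m)$ in $R(+)M$, which is standard. As a by-product, one obtains the equality $\mbox{2-}ord(Q(+)M)=\mbox{2-}ord(Q)$.
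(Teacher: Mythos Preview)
Your argument is correct and is exactly the routine verification the paper alludes to; the paper itself gives no details beyond ``The proof is routine.'' Your coordinate-chasing via the embedding $r\mapsto(r,0)$ for one direction and the first-coordinate projection for the other, together with $\sqrt{Q(+)M}=\sqrt{Q}(+)M$, is the natural (and essentially unique) way to carry this out, and your observation that $\mbox{2-}ord(Q(+)M)=\mbox{2-}ord(Q)$ is a pleasant bonus not stated in the paper.
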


\begin{proof}
The proof is routine.
\end{proof}

\begin{theorem}\label{product1} 
Let $R=R_1\times R_2$, where $R_1$ and $R_2$
are rings with $1\neq0$. Let $Q$ be a proper ideal of $R$. Then the
following conditions are equivalent:

\begin{enumerate}
\item $Q$ is a uniformly 2-absorbing primary ideal of $R$;

\item Either $Q=Q_1\times R_2$ for some uniformly 2-absorbing primary ideal $%
Q_1$ of $R_1$ or $Q=R_1\times Q_2$ for some uniformly 2-absorbing primary
ideal $Q_2$ of $R_2$ or $Q=Q_1\times Q_2$ for some uniformly primary ideal $%
Q_1$ of $R_1$ and some uniformly primary ideal $Q_2$ of $R_2$.
\end{enumerate}
\end{theorem}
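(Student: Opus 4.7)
My plan is to split along the two implications, with the bulk of the work going into $(1)\Rightarrow(2)$. Throughout I will use the standard fact that every ideal of $R_1\times R_2$ has the form $Q_1\times Q_2$ for ideals $Q_i$ of $R_i$, together with $\sqrt{Q_1\times Q_2}=\sqrt{Q_1}\times\sqrt{Q_2}$.

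For $(2)\Rightarrow(1)$, the first two sub-cases reduce directly to earlier results: if $Q=Q_1\times R_2$, then $Q=\pi_1^{-1}(Q_1)$ for the canonical projection $\pi_1:R\to R_1$, and Proposition \ref{monoepi}(1) gives the conclusion; the case $Q=R_1\times Q_2$ is symmetric. In the third sub-case $Q=Q_1\times Q_2$ with both $Q_i$ uniformly primary, I would write $Q=(Q_1\times R_2)\cap(R_1\times Q_2)$ and verify coordinatewise that each of the two factors is a uniformly primary ideal of $R$ with order bounded by $ord(Q_i)$, then invoke Theorem \ref{intersection}(1).

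For $(1)\Rightarrow(2)$, write $Q=Q_1\times Q_2$ and set $n=\mbox{2-}ord(Q)$. If $Q_2=R_2$, then $\pi_1:R\to R_1$ is an epimorphism with $\ker(\pi_1)=0\times R_2\subseteq Q$; Proposition \ref{monoepi}(2) then shows $\pi_1(Q)=Q_1$ is uniformly 2-absorbing primary in $R_1$. The case $Q_1=R_1$ is symmetric. In the remaining case $Q_1\neq R_1$ and $Q_2\neq R_2$, I must show each $Q_i$ is uniformly primary of order at most $n$. To handle $Q_1$, let $a,b\in R_1$ with $ab\in Q_1$ and $a\notin Q_1$, and test the defining property of $Q$ on the triple $(a,1),(1,0),(b,1)\in R$: their triple product is $(ab,0)\in Q$, while the pair $(a,1)(1,0)=(a,0)$ cannot lie in $Q$ (as $a\notin Q_1$) and the pair $(a,1)(b,1)=(ab,1)$ cannot lie in $\sqrt{Q}$ (since $1\notin\sqrt{Q_2}$ as $Q_2\neq R_2$); hence the only remaining alternative forces $((1,0)(b,1))^n=(b^n,0)\in Q$, giving $b^n\in Q_1$. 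The mirror argument using the triple $(1,a'),(0,1),(1,b')$ takes care of $Q_2$.

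The main obstacle is the selection of the test triple in the final case: one needs three elements whose product lands in $Q$ purely through a zero-coordinate collapse, while the two wrong branches of the uniformly 2-absorbing primary alternative are simultaneously blocked---$(a,0)\in Q$ by $a\notin Q_1$, and $(ab,1)\in\sqrt{Q}$ by $Q_2\neq R_2$. Once this triple is chosen, the rest of $(1)\Rightarrow(2)$ is nearly automatic, and the coordinatewise verifications required in $(2)\Rightarrow(1)$ are routine bookkeeping.
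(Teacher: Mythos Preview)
Your proposal is correct and follows essentially the same route as the paper: the same test triples $(a,1)(1,0)(b,1)$ and $(1,a')(0,1)(1,b')$ in the case where both $Q_i$ are proper, and the same reduction via Theorem \ref{intersection} for the converse. The only cosmetic difference is that where you invoke Proposition \ref{monoepi} directly for the projection/quotient cases, the paper phrases it through Corollary \ref{frac}(2) (which is itself an immediate consequence of Proposition \ref{monoepi}).
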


\begin{proof}
(1)$\Rightarrow$(2) Assume that $Q$ is a uniformly 2-absorbing primary ideal
of $R$ with 2-$ord_R(Q)=n$. We know that $Q$ is in the form of $Q_1\times
Q_2 $ for some ideal $Q_1$ of $R_1$ and some ideal $Q_2$ of $R_2$. Suppose
that $Q_2=R_2$. Since $Q$ is a proper ideal of $R$, $Q_1\neq R_1$. Let $%
R^{\prime }=\frac{R}{\{0\}\times R_2}$. Then $Q^{\prime }=\frac{Q}{%
\{0\}\times R_2}$ is a uniformly 2-absorbing primary ideal of $R^{\prime }$
by Corollary \ref{frac}(2). Since $R^{\prime }$ is ring-isomorphic to $R_1$
and $Q_1\simeq Q^{\prime }$, $Q_1$ is a uniformly 2-absorbing primary ideal
of $R_1$. Suppose that $Q_1=R_1$. Since $Q$ is a proper ideal of $R$, $%
Q_2\neq R_2$. By a similar argument as in the previous case, $Q_2$ is a
uniformly 2-absorbing primary ideal of $R_2$. Hence assume that $Q_1\neq R_1$
and $Q_2\neq R_2$. We claim that $Q_1$ is a uniformly primary ideal of $R_1$%
. Assume that $x,~y\in R_1$ such that $xy\in Q_1$ but $x\notin Q_1$. Notice
that $(x,1)(1,0)(y,1)=(xy,0)\in Q$, but neither $(x,1)(1,0)=(x,0)\in Q$ nor $%
(x,1)(y,1)=(xy,1)\in \sqrt{Q}$. So $[(1,0)(y,1)]^n=(y^n,0)\in Q$. Therefore $%
y^n\in Q_1$. Thus $Q_1$ is a uniformly primary ideal of $R_1$ with $%
ord_{R_1}(Q_1)\leq n$. Now, we claim that $Q_2$ is a uniformly primary ideal
of $R_2$. Suppose that for some $z,~w\in R_2$, $zw\in Q_2$ but $z\notin Q_2$%
. Notice that $(1,z)(0,1)(1,w)=(0,zw)\in Q$, but neither $%
(1,z)(0,1)=(0,z)\in Q$ nor $(1,z)(1,w)=(1,zw)\in \sqrt{Q}$. Therefore $%
[(0,1)(1,w)]^n=(0,w^n)\in Q$, and so $w^n\in Q_2$ which shows that $Q_2$ is
a uniformly primary ideal of $R_2$ with $ord_{R_2}(Q_2)\leq n$. Consequently
when $Q_1\neq R_1$ and $Q_2\neq R_2$ we have that $\mbox{max}%
\{ord_{R_1}(Q_1),ord_{R_2}(Q_2)\}\leq\mbox{2-}ord_R(Q)$.\newline
(2)$\Rightarrow$(1) If $Q=Q_1\times R_2$ for some uniformly 2-absorbing
primary ideal $Q_1$ of $R_1$, or $Q=R_1\times Q_2$ for some uniformly
2-absorbing primary ideal $Q_2$ of $R_2$, then it is clear that $Q$ is a
uniformly 2-absorbing primary ideal of $R$. Hence assume that $Q=Q_1\times
Q_2$ for some uniformly primary ideal $Q_1$ of $R_1$ and some uniformly
primary ideal $Q_2$ of $R_2$. Then $Q^{\prime }_1=Q_1\times R_2$ and $%
Q^{\prime }_2=R_1\times Q_2$ are uniformly primary ideals of $R$ with $%
ord_{R}(Q^{\prime }_1)\leq ord_{R_1}(Q_1)$ and $ord_{R}(Q^{\prime }_2)\leq
ord_{R_2}(Q_2)$. Hence $Q^{\prime }_1\cap Q^{\prime }_2=Q_1\times Q_2=Q$ is
a uniformly 2-absorbing primary ideal of $R$ with 2-$ord_R(Q)\leq\mbox{max}%
\{ord_{R_1}(Q_1),ord_{R_2}(Q_2)\}$ by Theorem \ref{intersection}.
\end{proof}

\begin{lemma}\label{product2} 
Let $R=R_1\times R_2\times\cdots\times R_n$, where $R_1,R_2,
...,R_n$ are rings with $1\neq0$. A proper ideal $Q$ of $R$ is a uniformly
primary ideal of $R$ if and only if $Q=\times_{i=1}^{n}Q_i$ such that for
some $k\in\{1,2,...,n\}$, $Q_k$ is a uniformly primary ideal of $R_k$, and $%
Q_i=R_i$ for every $i\in\{1,2,...,n\}\backslash\{k\}$.
\end{lemma}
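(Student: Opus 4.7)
The plan is to reduce the product-ring structure to a single factor using the idempotents $e_i = (0,\ldots,0,1,0,\ldots,0)$ with $1$ in the $i$-th coordinate. Since each ideal of a finite product of commutative rings decomposes as a product of ideals in the factors, I can begin by writing $Q = Q_1 \times Q_2 \times \cdots \times Q_n$ with each $Q_i \trianglelefteq R_i$. The whole argument then splits into a ``concentration'' step followed by a direct verification in each direction.

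For the forward direction, assume $Q$ is uniformly primary with $ord_R(Q) = n$. First I would show that at most one $Q_i$ is a proper ideal. Suppose for contradiction that $Q_i \neq R_i$ and $Q_j \neq R_j$ for some $i \neq j$. Then $e_i e_j = 0 \in Q$, and $e_i \notin Q$ because $1 \notin Q_i$; uniform primariness of $Q$ then forces $e_j^n \in Q$. Since $e_j$ is idempotent, $e_j^n = e_j$, so $e_j \in Q$, which gives $1 \in Q_j$, a contradiction. Because $Q$ is proper, at least one $Q_i$ is proper, hence exactly one, call it $Q_k$. Next I would show $Q_k$ is uniformly primary in $R_k$ with $ord_{R_k}(Q_k) \leq n$: given $x,y \in R_k$ with $xy \in Q_k$ and $x \notin Q_k$, set $u = (1,\ldots,1,x,1,\ldots,1)$ and $v = (1,\ldots,1,y,1,\ldots,1)$ (with $x,y$ in the $k$-th coordinate). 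Then $uv \in Q$ and $u \notin Q$, so $v^n \in Q$, forcing $y^n \in Q_k$.

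For the backward direction, suppose $Q = R_1 \times \cdots \times R_{k-1} \times Q_k \times R_{k+1} \times \cdots \times R_n$ with $Q_k$ uniformly primary of order $n$ in $R_k$. If $r = (r_1,\ldots,r_n), s = (s_1,\ldots,s_n) \in R$ satisfy $rs \in Q$ and $r \notin Q$, the only nontrivial coordinate is the $k$-th: $r_k s_k \in Q_k$ and $r_k \notin Q_k$. By uniform primariness of $Q_k$, $s_k^n \in Q_k$, and since $s_i^n \in R_i = Q_i$ for $i \neq k$, we conclude $s^n \in Q$, establishing that $Q$ is uniformly primary with $ord_R(Q) \leq n$.

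No step presents a genuine obstacle; the only real content is the idempotent argument that forces all but one of the factors to be the whole ring, and the rest is a routine coordinate-wise verification. In particular, the lemma gives the equality $ord_R(Q) = ord_{R_k}(Q_k)$ as a byproduct of the two order inequalities derived above.
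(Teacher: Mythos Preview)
Your argument is correct and follows essentially the same route as the paper: both use the orthogonal idempotents $e_i$ to force all but one factor to be the whole ring, then verify uniform primariness coordinate-wise, with the paper simply declaring the converse ``easy'' where you spell it out. One minor quibble: you use $n$ both for the number of factors and for $ord_R(Q)$, which clashes with the statement's notation---the paper avoids this by writing $ord_R(Q)=m$.
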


\begin{proof}
($\Rightarrow$) Let $Q$ be a uniformly primary ideal of $R$ with $ord_R(Q)=m$%
. We know $Q=\times_{i=1}^{n}Q_i$ where for every $1\leq i\leq n$, $Q_i$ is
an ideal of $R_i$, respectively. Assume that $Q_r$ is a proper ideal of $R_r$
and $Q_s$ is a proper ideal of $R_s$ for some $1\leq r<s\leq n$. Since 
\begin{equation*}
(0,\dots,0,\overbrace{1_{R_r}}^{r\mbox{-th}},0,\dots,0)(0,\dots,0,\overbrace{%
1_{R_s}}^{s\mbox{-th}},0,\dots,0)=(0,\dots,0)\in Q,
\end{equation*}
then either $(0,\dots,0,\overbrace{1_{R_r}}^{r\mbox{-th}},0,\dots,0)\in Q$
or $(0,\dots,0,\overbrace{1_{R_s}}^{s\mbox{-th}},0,\dots,0)^m\in Q$, which
is a contradiction. Hence exactly one of the $Q_i$'s is proper, say $Q_k$.
Now, we show that $Q_k$ is a uniformly primary ideal of $R_k$. Let $ab\in
Q_k $ for some $a,b\in R_k$ such that $a\notin Q_k$. Therefore 
\begin{equation*}
(0,\dots,0,\overbrace{a}^{k\mbox{-th}},0,\dots,0)(0,\dots,0,\overbrace{b}^{k%
\mbox{-th}},0,\dots,0)=(0,\dots,0,\overbrace{ab}^{k\mbox{-th}},0,\dots,0)\in
Q,
\end{equation*}
but $(0,\dots,0,\overbrace{a}^{k\mbox{-th}},0,\dots,0)\notin Q$, and so $%
(0,\dots,0,\overbrace{b}^{k\mbox{-th}},0,\dots,0)^m\in Q$. Thus $b^m\in Q_k$
which implies that $Q_k$ is a uniformly primary ideals of $R_k$ with $%
ord_{R_k}(Q_k)\newline
\leq m$.\newline
($\Leftarrow$) Is easy.
\end{proof}

\begin{theorem}\label{product3}
Let $R=R_1\times R_2\times\cdots\times R_n$, where $2\leq n<\infty$, and $%
R_1,R_2, ...,R_n$ are rings with $1\neq0$. For a proper ideal $Q$ of $R$ the
following conditions are equivalent:

\begin{enumerate}
\item $Q$ is a uniformly 2-absorbing primary ideal of $R$.

\item Either $Q=\times^n_{t=1}Q_t$ such that for some $k\in\{1,2,...,n\}$, $%
Q_k$ is a uniformly 2-absorbing primary ideal of $R_k$, and $Q_t=R_t$ for
every $t\in\{1,2,...,n\}\backslash\{k\}$ or $Q=\times_{t=1}^nQ_t$ such that
for some $k,m\in\{1,2,...,n\}$, $Q_k$ is a uniformly primary ideal of $R_k$, 
$Q_m$ is a uniformly primary ideal of $R_m$, and $Q_t=R_t$ for every $%
t\in\{1,2,...,n\}\backslash\{k,m\}$.
\end{enumerate}
\end{theorem}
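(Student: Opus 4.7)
My plan is to prove Theorem \ref{product3} by induction on $n$, using Theorem \ref{product1} repeatedly together with Lemma \ref{product2} to handle the uniformly primary factors. The base case $n=2$ is exactly Theorem \ref{product1}. For the inductive step, assume the statement holds for products of $n-1$ factors (with $n-1\geq 2$), set $R'=R_2\times\cdots\times R_n$, and view $R=R_1\times R'$ as a product of two rings.

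For the implication $(1)\Rightarrow(2)$, suppose $Q$ is uniformly 2-absorbing primary in $R$. Applying Theorem \ref{product1} to the decomposition $R=R_1\times R'$, exactly one of the following three subcases occurs:
\textbf{(a)} $Q=Q_1\times R'$ with $Q_1$ uniformly 2-absorbing primary in $R_1$; this already gives form (2) with $k=1$ since $R'=R_2\times\cdots\times R_n$.
\textbf{(b)} $Q=R_1\times Q'$ with $Q'$ uniformly 2-absorbing primary in $R'$; by the inductive hypothesis applied to $R'$, the ideal $Q'$ is either of the form $\times_{t=2}^{n}Q_t$ with one $Q_k$ uniformly 2-absorbing primary and the others equal to $R_t$, or of the form with two uniformly primary components $Q_k,Q_m$ and the rest full. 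Taking the product with $R_1$ produces exactly one of the two forms in (2).
\textbf{(c)} $Q=Q_1\times Q'$ with $Q_1$ uniformly primary in $R_1$ and $Q'$ uniformly primary in $R'$; by Lemma \ref{product2}, $Q'=\times_{t=2}^{n}Q_t$ with exactly one $Q_k$ uniformly primary in $R_k$ and $Q_t=R_t$ for $t\in\{2,\dots,n\}\setminus\{k\}$. Hence $Q=Q_1\times\cdots\times Q_n$ with $Q_1$ and $Q_k$ uniformly primary and all other entries full, which is the second form of (2) with indices $1$ and $k$.

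For the converse $(2)\Rightarrow(1)$, reverse the case analysis. If $Q=\times_{t=1}^{n}Q_t$ with $Q_k$ uniformly 2-absorbing primary and the other entries full, rewrite $Q$ as either $Q_1\times R'$ (when $k=1$) or $R_1\times Q'$ where $Q'$ falls under the inductive hypothesis (when $k\geq 2$), so Theorem \ref{product1} concludes. If $Q$ has two uniformly primary components $Q_k,Q_m$ with the rest full, split into the cases where both indices are $\geq 2$ (handled by applying the inductive hypothesis to $R'$ to see that $Q'=\times_{t=2}^{n}Q_t$ is uniformly 2-absorbing primary in $R'$, and then applying the third alternative of Theorem \ref{product1} to $R_1\times Q'$ after noting $Q_1=R_1$) or where one of the indices is $1$ (then write $Q=Q_1\times Q'$ with $Q'$ uniformly primary in $R'$ by Lemma \ref{product2}, and apply the third alternative of Theorem \ref{product1} directly).

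The main obstacle is purely organizational: keeping the case analysis straight when combining the three alternatives coming from Theorem \ref{product1} with the two alternatives coming from the inductive hypothesis, and correctly matching the index sets. No new ideas about 2-absorbing primary behavior are needed beyond Theorem \ref{product1} and Lemma \ref{product2}; the content of the theorem is entirely captured by iterating the two-factor splitting and recognizing that the ``uniformly primary'' summand forced in the third alternative must itself concentrate on a single coordinate in a further product.
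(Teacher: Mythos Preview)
Your proposal is correct and follows essentially the same approach as the paper: induction on $n$, with the base case given by Theorem \ref{product1}, and the inductive step obtained by splitting $R$ as a product of two rings and invoking Theorem \ref{product1} together with Lemma \ref{product2}. The only cosmetic difference is that the paper groups the first $n-1$ factors as $K$ and writes $R=K\times R_n$, whereas you group the last $n-1$ factors as $R'$; and in your $(2)\Rightarrow(1)$ subcase with both $k,m\geq 2$ you meant the \emph{second} alternative of Theorem \ref{product1} (namely $Q=R_1\times Q'$ with $Q'$ uniformly 2-absorbing primary), not the third.
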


\begin{proof}
We use induction on $n$. For $n=2$ the result holds by Theorem \ref{product1}
. Then let $3\leq n <\infty$ and suppose that the result is valid when $%
K=R_1\times\cdots\times R_{n-1}$. We show that the result holds when $%
R=K\times R_n$. By Theorem \ref{product1}, $Q$ is a uniformly 2-absorbing
primary ideal of $R$ if and only if either $Q=L\times R_n$ for some
uniformly 2-absorbing primary ideal $L$ of $K$ or $Q=K\times L_n$ for some
uniformly 2-absorbing primary ideal $L_n$ of $R_n$ or $Q=L\times L_n$ for
some uniformly primary ideal $L$ of $K$ and some uniformly primary ideal $%
L_n $ of $R_n$. Notice that by Lemma \ref{product2}, a proper ideal $L$ of $K$
is a uniformly primary ideal of $K$ if and only if $L=\times_{t=1}^{n-1}Q_t$
such that for some $k\in\{1,2,...,n-1\}$, $Q_k$ is a uniformly primary ideal
of $R_k$, and $Q_t=R_t$ for every $t\in\{1,2,...,n-1\}\backslash\{k\}$.
Consequently we reach the claim.
\end{proof}

\section{Special 2-absorbing primary ideals}
\begin{definition}
We say that a proper ideal $Q$ of a ring $R$ is \textit{special 2-absorbing
primary} if it is uniformly 2-absorbing primary with 2-$ord(Q)=1$.
\end{definition}

\begin{remark}
By Proposition \ref{prop1}(2), every primary ideal is a special 2-absorbing
primary ideal. But the converse is not true in general. For example, let $%
p,~q$ be two distinct prime numbers. Then $pq\mathbb{Z}$ is a 2-absorbing
ideal of $\mathbb{Z}$ and so it is a special 2-absorbing primary ideal of $%
\mathbb{Z}$, by Proposition \ref{prop1}(1). Clearly $pq\mathbb{Z}$ is not
primary.
\end{remark}

Recall that a prime ideal $\mathfrak{p}$ of $R$ is called \textit{divided
prime} if $\mathfrak{p}\subset xR$ for every $x\in R\backslash \mathfrak{p}$.

\begin{proposition}
Let $Q$ be a special 2-absorbing primary ideal of $R$ such that $\sqrt{Q}=%
\mathfrak{p}$ is a divided prime ideal of $R$. Then $Q$ is a $\mathfrak{p}$%
-primary ideal of $R$.
\end{proposition}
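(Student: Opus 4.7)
The goal is to show that if $xy\in Q$ and $x\notin\sqrt{Q}=\mathfrak{p}$, then $y\in Q$. (Then $Q$ is primary, and since $\sqrt{Q}=\mathfrak{p}$ it is $\mathfrak{p}$-primary.) I would begin by dispensing with the trivial case: if $y\notin\mathfrak{p}$, then since $\mathfrak{p}$ is prime and $x\notin\mathfrak{p}$, we would have $xy\notin\mathfrak{p}$, contradicting $xy\in Q\subseteq\mathfrak{p}$. So from now on we may assume $y\in\mathfrak{p}$.

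Now the divided-prime hypothesis enters: because $x\notin\mathfrak{p}$, we have $\mathfrak{p}\subseteq xR$, so in particular $y=xr$ for some $r\in R$. The plan is to exploit this representation by applying the special 2-absorbing primary property (order $1$) to the three-factor product $x\cdot r\cdot x=x^{2}r=xy\in Q$. With $a=x$, $b=r$, $c=x$ we get $ac=x^{2}$; since $\mathfrak{p}$ is prime and $x\notin\mathfrak{p}$, we have $x^{2}\notin\mathfrak{p}=\sqrt{Q}$, so $ac\notin\sqrt{Q}$. Hence by the definition of a special 2-absorbing primary ideal either $ab=xr\in Q$ or $bc=rx\in Q$, and in both cases this reads $y\in Q$, as required.

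The only place that requires care is arranging the factorization $x^{2}r=x\cdot r\cdot x$ rather than $x\cdot x\cdot r$: with $a=b=x$, $c=r$ the three ``test'' products are $x^{2}$, $xr=y$, $xr=y$, and the middle one already lies in $\sqrt{Q}$, so the 2-absorbing primary condition yields no information. By instead sandwiching $r$ between two copies of $x$, the product $ac=x^{2}$ is forced out of $\sqrt{Q}$ and the remaining two alternatives both collapse to $y\in Q$. So the main (and only) obstacle is choosing the right arrangement of the factors, and the divided-prime hypothesis is exactly what lets us produce the auxiliary factor $r$ with $y=xr$ needed to set it up.
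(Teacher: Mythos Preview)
Your proof is correct and follows essentially the same approach as the paper's: use the divided prime hypothesis to write one factor as a multiple of the other, then apply the special 2-absorbing primary condition to the resulting product with the ``sandwich'' arrangement so that the $ac\in\sqrt{Q}$ alternative is ruled out. The only difference is cosmetic---the paper assumes $y\notin\mathfrak{p}$ and concludes $x\in Q$, whereas you assume $x\notin\mathfrak{p}$ and conclude $y\in Q$---and your explicit remark about why the factor ordering matters is a nice clarification that the paper leaves implicit.
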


\begin{proof}
Let $xy\in Q$ for some $x,y\in R$ such that $y\notin\mathfrak{p}$. Then $x\in%
\mathfrak{p}$. Since $\mathfrak{p}$ is a divided prime ideal, $\mathfrak{p}%
\subset yR$ and so there exists $r\in R$ such that $x=ry$. Hence $xy=ry^2\in
Q$. Since $Q$ is special 2-absorbing primary and $y\notin\mathfrak{p}$, then 
$x=ry\in Q$. Consequently $Q$ is a $\mathfrak{p}$-primary ideal of $R$.
\end{proof}

\begin{remark}
Let $p,~q$ be distinct prime numbers. Then by \cite[Theorem 2.4]{BTY} we can
deduce that $p\mathbb{Z}\cap q^2\mathbb{Z}$ is a 2-absorbing primary ideal
of $\mathbb{Z}$. Since $pq^2\in p\mathbb{Z}\cap q^2\mathbb{Z}$, $pq\notin p%
\mathbb{Z}\cap q^2\mathbb{Z}$ and $q^2\notin p\mathbb{Z}\cap q\mathbb{Z}$,
then $p\mathbb{Z}\cap q^2\mathbb{Z}$ is not a special 2-absorbing primary
ideal of $\mathbb{Z}$.
\end{remark}

Notice that for $n=1$ we have that $I^{[n]}=I$.

\begin{theorem}\label{special}
Let $Q$ be a proper ideal of $R$. Then the following conditions are
equivalent:

\begin{enumerate}
\item $Q$ is special 2-absorbing primary;

\item For every $a,b\in R$ either $ab\in Q$ or $(Q:_Rab)=(Q:_Ra)$ or $%
(Q:_Rab)\subseteq(\sqrt{Q}:_Rb)$;

\item For every $a,b\in R$ and every ideal $I$ of $R$, $abI\subseteq Q$
implies that either $ab\in Q$ or $aI\subseteq Q$ or $bI\subseteq\sqrt{Q}$;

\item For every $a\in R$ and every ideal $I$ of $R$ either $aI\subseteq Q$
or $(Q:_RaI)\subseteq(Q:_Ra)\cup(\sqrt{Q}:_RI)$;

\item For every $a\in R$ and every ideal $I$ of $R$ either $aI\subseteq Q$
or $(Q:_RaI)=(Q:_Ra)$ or $(Q:_RaI)\subseteq(\sqrt{Q}:_RI)$;

\item For every $a\in R$ and every ideals $I,J$ of $R$, $aIJ\subseteq Q$
implies that either $aI\subseteq Q$ or $IJ\subseteq \sqrt{Q}$ or $%
aJ\subseteq Q$;

\item For every ideals $I,J$ of $R$ either $IJ\subseteq\sqrt{Q}$ or $%
(Q:_RIJ)\subseteq(Q:_RI)\cup(Q:_RJ)$;

\item For every ideals $I,J$ of $R$ either $IJ\subseteq\sqrt{Q}$ or $%
(Q:_RIJ)=(Q:_RI)$ or $(Q:_RIJ)=(Q:_RJ)$;

\item For every ideals $I,J,K$ of $R$, $IJK\subseteq Q$ implies that either $%
IJ\subseteq \sqrt{Q}$ or $IK\subseteq Q$ or $JK\subseteq Q$.
\end{enumerate}
\end{theorem}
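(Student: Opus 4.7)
The plan is first to dispatch (1) $\Leftrightarrow$ (2) $\Leftrightarrow$ (3) by invoking Theorem \ref{main1}: substituting $n=1$ in parts (3) and (4) of that theorem yields verbatim parts (2) and (3) of the present theorem, so the equivalence of the first three items is free. For the remaining six conditions I would close the cycle (3) $\Rightarrow$ (4) $\Rightarrow$ (5) $\Rightarrow$ (6) $\Rightarrow$ (7) $\Rightarrow$ (8) $\Rightarrow$ (9) $\Rightarrow$ (1).

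The six internal implications split into three routine patterns. The avoidance steps (4) $\Rightarrow$ (5) and (7) $\Rightarrow$ (8) use the elementary fact that an ideal contained in the union of two ideals lies inside one of them, combined with the automatic reverse inclusions $(Q:_R a)\subseteq (Q:_R aI)$ and $(Q:_R I)\subseteq (Q:_R IJ)$ (together with $(Q:_R J)\subseteq (Q:_R IJ)$) to upgrade the resulting $\subseteq$ to equalities. The colon-rewriting steps (5) $\Rightarrow$ (6) and (8) $\Rightarrow$ (9) read a product containment $aIJ\subseteq Q$ as $J\subseteq (Q:_R aI)$ (respectively, $IJK\subseteq Q$ as $K\subseteq (Q:_R IJ)$) and then apply the dichotomy provided by (5) or (8) to the colon ideal. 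The remaining steps (3) $\Rightarrow$ (4) and (6) $\Rightarrow$ (7) go the other direction: pick an arbitrary $x$ in the colon ideal one wants to control, treat it as an element variable, and feed $x$ into the weaker hypothesis.

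The principal delicacy is the final implication (9) $\Rightarrow$ (1). Specializing (9) to the principal ideals $(a),(b),(c)$ only produces ``$ab\in\sqrt{Q}$ or $ac\in Q$ or $bc\in Q$'', whereas (1) demands ``$ab\in Q$ or $ac\in\sqrt{Q}$ or $bc\in Q$''; the radical sits on a different factor. The fix is to exploit the permutation symmetry of (9) and apply it instead to the triple $((a),(c),(b))$, whose product is still $abc\in Q$: the output ``$ac\in\sqrt{Q}$ or $ab\in Q$ or $cb\in Q$'' is precisely (1). A milder sibling of the same issue arises in (3) $\Rightarrow$ (4): since (3) is asymmetric between its $aI\subseteq Q$ and $bI\subseteq\sqrt{Q}$ clauses, when faced with $xaI=axI\subseteq Q$ one must place $a$ in the first slot and $x$ in the second, so that the unwanted case ``$aI\subseteq Q$'' is ruled out by the standing hypothesis and only $ax\in Q$ or $xI\subseteq \sqrt{Q}$ survive, i.e.\ $x\in (Q:_R a)\cup (\sqrt{Q}:_R I)$.
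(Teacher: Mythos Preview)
Your proposal is correct and follows essentially the same route as the paper: the paper likewise invokes Theorem~\ref{main1} for (1)$\Leftrightarrow$(2)$\Leftrightarrow$(3), writes out (3)$\Rightarrow$(4) exactly as you describe (pick $x\in(Q:_RaI)$ and apply (3) to $a,x,I$), and then declares (4)$\Rightarrow$(5)$\Rightarrow\cdots\Rightarrow$(9)$\Rightarrow$(1) to be straightforward. Your explicit treatment of the permutation needed in (9)$\Rightarrow$(1) is a genuine detail the paper leaves unsaid, but it is the natural way to complete that implication and not a different approach.
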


\begin{proof}
(1)$\Leftrightarrow $(2)$\Leftrightarrow $(3) By Theorem \ref{main1}.\newline
(3)$\Rightarrow $(4) Let $a\in R$ and $I$ be an ideal of $R$ such that $%
aI\nsubseteq Q$. Suppose that $x\in (Q:_{R}aI)$. Then $axI\subseteq Q$, and
so by part (3) we have that $x\in (Q:_{R}a)$ or $x\in (\sqrt{Q}:_{R}I)$.
Therefore $(Q:_{R}aI)\subseteq (Q:_{R}a)\cup (\sqrt{Q}:_{R}I)$.\newline
(4)$\Rightarrow $(5)$\Rightarrow $(6)$\Rightarrow $(7)$\Rightarrow $(8)$%
\Rightarrow $(9)$\Rightarrow $(1) Have straightforward proofs.
\end{proof}

\begin{theorem}\label{main2} 
Let $Q$ be a special 2-absorbing primary ideal of $R$ and $%
x\in R\backslash \sqrt{Q}$. The following conditions hold:

\begin{enumerate}
\item $(Q:_Rx)=(Q:_Rx^n)$ for every $n\geq2$.

\item $(\sqrt{Q}:_Rx)=\sqrt{(Q:_Rx)}$.

\item $(Q:_Rx)$ is a special 2-absorbing primary ideal of $R$.
\end{enumerate}
\end{theorem}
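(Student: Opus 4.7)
The plan is to prove the three parts in sequence, with each feeding into the next. Part (1) will be handled by a targeted application of Theorem \ref{special}(9); part (2) follows essentially formally from part (1); and part (3) is a direct verification that uses part (2).

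For part (1), the containment $(Q:_Rx) \subseteq (Q:_Rx^n)$ is immediate. For the reverse, given $r \in (Q:_Rx^n)$, I would apply Theorem \ref{special}(9) to the three principal ideals $I = (x)$, $J = (x)$, $K = (rx^{n-2})$, whose product equals $(rx^n) \subseteq Q$. The branch $IJ = (x^2) \subseteq \sqrt{Q}$ is ruled out by the hypothesis $x \notin \sqrt{Q}$, so one of $IK \subseteq Q$ or $JK \subseteq Q$ must hold; both read as $rx^{n-1} \in Q$. An induction on $n$ (the base case $n=2$ giving $rx \in Q$ at once) then finishes the argument. The key insight is to place both copies of $x$ in the $IJ$ slot so that the only option leading to $\sqrt{Q}$ is the one forbidden by $x \notin \sqrt{Q}$.

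Part (2) combines part (1) with the trivial identity $(rx)^m = r^m x^m$. The inclusion $\sqrt{(Q:_Rx)} \subseteq (\sqrt{Q}:_Rx)$ is routine: if $r^m x \in Q$, then $(rx)^m = x^{m-1}(r^m x) \in Q$ and hence $rx \in \sqrt{Q}$. For the reverse, given $rx \in \sqrt{Q}$, choose $m$ with $r^m x^m \in Q$; then $r^m \in (Q:_R x^m)$, and by part (1) this equals $(Q:_R x)$, so $r^m x \in Q$ and $r \in \sqrt{(Q:_R x)}$.

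For part (3), I would verify the defining property of special 2-absorbing primary directly. Suppose $abc \in (Q:_Rx)$, that is $abcx \in Q$. Applying the special 2-absorbing primary property of $Q$ to the triple $(a, b, cx)$ yields three cases: $ab \in Q$, or $acx \in \sqrt{Q}$, or $bcx \in Q$. These translate respectively to $ab \in (Q:_Rx)$, to $ac \in (\sqrt{Q}:_Rx) = \sqrt{(Q:_Rx)}$ (using part (2)), and to $bc \in (Q:_Rx)$, which are exactly the three options required. The main obstacle I anticipate is finding the right grouping of principal ideals in part (1); once that trick is in place, parts (2) and (3) are essentially bookkeeping.
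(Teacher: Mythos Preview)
Your proof is correct and follows the same overall strategy as the paper's: induction on $n$ in part (1), deriving (2) from (1) via $(rx)^m=r^mx^m$, and a direct verification in part (3) using (2). The only notable difference is the grouping in part (3): you factor $abcx$ as the triple $(a,b,cx)$, which delivers the three required conclusions $ab\in(Q:_Rx)$, $ac\in\sqrt{(Q:_Rx)}$, $bc\in(Q:_Rx)$ in one step. The paper instead applies the special 2-absorbing primary property to the triple $(x,a,bc)$, which produces an intermediate case $abc\in Q$ requiring a second application of the defining property before reaching the desired conclusions. Your choice of grouping is the cleaner of the two. In part (1) your use of Theorem~\ref{special}(9) with $I=J=(x)$, $K=(rx^{n-2})$ is a cosmetic variant of the paper's element-level argument with the triple $(x,r,x^{n-1})$; both lead to $rx^{n-1}\in Q$ and close by the same induction.
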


\begin{proof}
(1) Clearly $(Q:_Rx)\subseteq(Q:_Rx^n)$ for every $n\geq2$. For the converse
inclusion we use induction on $n$. First we get $n=2$. Let $r\in(Q:_Rx^2)$.
Then $rx^2\in Q$, and so either $rx\in Q$ or $x^2\in\sqrt{Q}$. Notice that $%
x^2\in\sqrt{Q}$ implies that $x\in\sqrt{Q}$ which is a contradiction.
Therefore $rx\in Q$ and so $r\in(Q:_Rx)$. Therefore $(Q:_Rx)=(Q:_Rx^2)$.
Now, assume $n>2$ and suppose that the claim holds for $n-1$, i.e. $%
(Q:_Rx)=(Q:_Rx^{n-1})$. Let $r\in(Q:_Rx^n)$. Then $rx^n\in Q$. Since $x\notin%
\sqrt{Q}$, then we have either $rx^{n-1}\in Q$ or $rx\in Q$. Both two cases
implies that $r\in(Q:_Rx)$. Consequently $(Q:_Rx)=(Q:_Rx^n)$.

(2) It is easy to investigate that $\sqrt{(Q:_Rx)}\subseteq(\sqrt{Q}:_Rx)$.
Let $r\in(\sqrt{Q}:_Rx)$. Then there exists a positive integer $m$ such that 
$(rx)^m\in Q$. So, by part (1) we have that $r^m\in(Q:_Rx)$. Hence $r\in%
\sqrt{(Q:_Rx)}$. Thus $(\sqrt{Q}:_Rx)=\sqrt{(Q:_Rx)}$.

(3) Let $abc\in(Q:_Rx)$ for some $a,b,c\in R$. Then $ax(bc)\in Q$ and so $%
ax\in Q$ or $abc\in Q$ or $bcx\in\sqrt{Q}$. In the first case we have $%
ab\in(Q:_Rx)$. If $abc\in Q$, then either $ab\in Q\subseteq{(Q:_Rx)}$ or $%
ac\in Q\subseteq{(Q:_Rx)}$ or $bc\in\sqrt{Q}\subseteq\sqrt{(Q:_Rx)}$. In the
third case we have $bc\in(\sqrt{Q}:_Rx)=\sqrt{(Q:_Rx)}$ by part (2).
Therefore $(Q:_Rx)$ is a special 2-absorbing primary ideal of $R$.
\end{proof}

\begin{theorem}\label{main3} 
Let $Q$ be an irreducible ideal of $R$. Then $Q$ is special
2-absorbing primary if and only if $(Q:_Rx)=(Q:_Rx^2)$ for every $x\in
R\backslash\sqrt{Q}$.
\end{theorem}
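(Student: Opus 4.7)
The forward direction is immediate from Theorem~\ref{main2}(1), applied with $n=2$.

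For the converse, assume $Q$ is irreducible and $(Q:_{R}x)=(Q:_{R}x^{2})$ for every $x\in R\setminus\sqrt{Q}$. Pick $a,b,c\in R$ with $abc\in Q$, $ab\notin Q$ and $ac\notin\sqrt{Q}$; the task is to show $bc\in Q$. My plan is to establish the identity
\[
Q=\bigl(Q+(ac)\bigr)\cap\bigl(Q+(bc)\bigr).
\]
Once this is in hand, irreducibility of $Q$ forces either $Q=Q+(ac)$, which would give $ac\in Q\subseteq\sqrt{Q}$ and contradict the hypothesis, or $Q=Q+(bc)$, which yields the desired $bc\in Q$.

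Only the nontrivial inclusion $\supseteq$ requires argument. For $t$ in the right-hand side, write $t=q_{1}+rac=q_{2}+sbc$ with $q_{1},q_{2}\in Q$ and $r,s\in R$. Multiplying the second representation by $ac$ produces $act=(ac)q_{2}+s(abc)c\in Q$, so $t\in(Q:_{R}ac)$; since $ac\notin\sqrt{Q}$, the standing hypothesis gives $(Q:_{R}ac)=(Q:_{R}(ac)^{2})$, whence $t(ac)^{2}\in Q$. Next, multiplying $t=q_{1}+rac$ by $ac$ yields $act=(ac)q_{1}+r(ac)^{2}$, and combining this with $act\in Q$ and $(ac)q_{1}\in Q$ forces $r(ac)^{2}\in Q$. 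Hence $r\in(Q:_{R}(ac)^{2})=(Q:_{R}ac)$, so $rac\in Q$ and therefore $t=q_{1}+rac\in Q$.

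The main subtlety is the deliberate choice of the two summands $(ac)$ and $(bc)$: they are engineered so that $abc\in Q$ activates after one multiplication by $ac$, and so that the single-step stabilization $(Q:_{R}ac)=(Q:_{R}(ac)^{2})$ furnished by the hypothesis is sufficient when invoked twice---first to extract $t\in(Q:_{R}(ac)^{2})$, then to extract $r\in(Q:_{R}ac)$---without any appeal to higher iterates of colon ideals. With the decomposition in place, irreducibility of $Q$ closes the argument.
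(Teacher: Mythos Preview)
Your proof is correct and follows essentially the same strategy as the paper's: exhibit $Q$ as the intersection of two larger ideals and invoke irreducibility. The paper uses the pair $(Q+Rab)\cap(Q+Rac)$ and applies the hypothesis with $x=b$ (which lies outside $\sqrt{Q}$ in their setup), whereas you use $(Q+(ac))\cap(Q+(bc))$ and apply the hypothesis with $x=ac$; the mechanics of multiplying through and cancelling via $(Q:_{R}x^{2})=(Q:_{R}x)$ are identical in spirit.

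One small expository remark: you describe the hypothesis as being invoked ``twice,'' but the step $t\in(Q:_{R}ac)\Rightarrow t\in(Q:_{R}(ac)^{2})$ is automatic from the trivial containment $(Q:_{R}ac)\subseteq(Q:_{R}(ac)^{2})$ and does not require the hypothesis. The only essential use is the reverse direction, pulling $r$ from $(Q:_{R}(ac)^{2})$ back into $(Q:_{R}ac)$. This does not affect correctness, only the narrative.
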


\begin{proof}
$(\Rightarrow)$ By Theorem \ref{main2}.\newline
$(\Leftarrow)$ Let $abc\in Q$ for some $a,b,c\in R$ such that neither $ab\in
Q$ nor $ac\in Q$ nor $bc\in\sqrt{Q}$. We search for a contradiction. Since $%
bc\notin\sqrt{Q}$, then $b\notin\sqrt{Q}$. So, by our hypothesis we have $%
(Q:_Rb)=(Q:_Rb^2)$. Let $r\in(Q+Rab)\cap(Q+Rac)$. Then there are $q_1,q_2\in
Q$ and $r_1,r_2\in R$ such that $r=q_1+r_1ab=q_2+r_2ac$. Hence $%
q_1b+r_1ab^2=q_2b+r_2abc\in Q$. Thus $r_1ab^2\in Q$, i.e., $%
r_1a\in(Q:_Rb^2)=(Q:_Rb)$. Therefore $r_1ab\in Q$ and so $r=q_1+r_1ab\in Q$.
Then $Q=(Q+Rab)\cap(Q+Rac)$, which contradicts the assumption that $Q$ is
irreducible.
\end{proof}

A ring $R$ is said to be a \textit{Boolean ring} if $x=x^2$ for all $x\in R$%
. It is famous that every prime ideal in a Boolean ring $R$ is maximal.
Notice that every ideal of a Boolean ring $R$ is radical. So, every
(uniformly) 2-absorbing primary ideal of $R$ is a 2-absorbing ideal of $R$.

\begin{corollary}
Let $R$ be a Boolean ring. Then every irreducible ideal of $R$ is a maximal
ideal.
\end{corollary}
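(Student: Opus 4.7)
The plan is to combine Theorem \ref{main3}, the remark just above the corollary, and Proposition \ref{rad}. Let $Q$ be an irreducible ideal of the Boolean ring $R$. Since $x = x^2$ for every $x \in R$, the equality $(Q:_R x) = (Q:_R x^2)$ holds trivially for every $x \in R$, and in particular for every $x \in R \setminus \sqrt{Q}$. This is exactly the hypothesis of Theorem \ref{main3}, which, together with the irreducibility of $Q$, yields that $Q$ is a special $2$-absorbing primary ideal.

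Next, I would use that every ideal of a Boolean ring is radical, so $Q = \sqrt{Q}$; combining this with the remark preceding the corollary (every uniformly $2$-absorbing primary ideal of a Boolean ring is a $2$-absorbing ideal), $Q$ is in fact a $2$-absorbing (hence $2$-absorbing primary) ideal. Proposition \ref{rad} then splits into two cases: either $\sqrt{Q} = Q$ is a prime ideal of $R$, or $Q = \sqrt{Q} = \mathfrak{p} \cap \mathfrak{q}$ for the two distinct minimal primes $\mathfrak{p}, \mathfrak{q}$ over $Q$.

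The plan is to rule out the second case using irreducibility: two distinct minimal primes over $Q$ are incomparable, so both $\mathfrak{p}$ and $\mathfrak{q}$ strictly contain $\mathfrak{p} \cap \mathfrak{q} = Q$, contradicting irreducibility. Hence $Q$ must be prime; since every prime ideal of a Boolean ring is maximal, $Q$ is maximal.

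The only delicate point is checking that in the dichotomy produced by Proposition \ref{rad} the two candidate primes are \emph{strictly} larger than $Q$, but this is immediate from the definition of minimality together with their distinctness. Everything else is a direct appeal to the cited results.
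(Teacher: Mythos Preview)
Your proof is correct and follows essentially the same route as the paper: apply Theorem~\ref{main3} (whose hypothesis $(Q:_Rx)=(Q:_Rx^2)$ is automatic in a Boolean ring) to get that $Q$ is special 2-absorbing primary, invoke Proposition~\ref{rad} together with $Q=\sqrt{Q}$ to obtain the prime-or-intersection-of-two-primes dichotomy, eliminate the second case by irreducibility, and finish using that primes in a Boolean ring are maximal. Your detour through the preceding remark (upgrading to a 2-absorbing ideal) is harmless but unnecessary, since Proposition~\ref{rad} already applies to any uniformly 2-absorbing primary ideal.
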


\begin{proof}
Let $I$ be an irreducible ideal of $R$. Thus, Theorem \ref{main3} implies
that $I$ is special 2-absorbing primary. Therefore by Proposition \ref{rad},
either $I=\sqrt{I}$ is a maximal ideal or is the intersection of two
distinct maximal ideals. Since $I$ is irreducible, then $I$ cannot be in the
second form. Hence $I$ is a maximal ideal.
\end{proof}

\begin{proof}
By Proposition \ref{rad} and Theorem \ref{main3}.
\end{proof}

\begin{proposition}
Let $Q$ be a special 2-absorbing primary ideal of $R$ and $\mathfrak{p},~%
\mathfrak{q}$ be distinct prime ideals of $R$.

\begin{enumerate}
\item If $\sqrt{Q}=\mathfrak{p}$, then $\{(Q:_Rx)\mid x\in R\backslash%
\mathfrak{p}\}$ is a totally ordered set.

\item If $\sqrt{Q}=\mathfrak{p}\cap\mathfrak{q}$, then $\{(Q:_Rx)\mid x\in
R\backslash\mathfrak{p}\cup\mathfrak{q}\}$ is a totally ordered set.
\end{enumerate}
\end{proposition}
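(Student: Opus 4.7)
The plan is to deduce both parts uniformly from Theorem~\ref{special}(8), which says that for any ideals $I, J$ of $R$, either $IJ\subseteq\sqrt{Q}$, or $(Q:_{R}IJ)=(Q:_{R}I)$, or $(Q:_{R}IJ)=(Q:_{R}J)$. Given arbitrary $x,y$ in the prescribed complement, I would set $I=Rx$ and $J=Ry$, so that $IJ=Rxy$ and $(Q:_{R}IJ)=(Q:_{R}xy)$, and then reduce the totally-ordered claim to a dichotomy on $(Q:_{R}xy)$.

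The first step is to verify that the first alternative of Theorem~\ref{special}(8) is ruled out, i.e.\ that $xy\notin\sqrt{Q}$. In part (1), this is immediate from the primality of $\mathfrak{p}=\sqrt{Q}$ together with $x,y\notin\mathfrak{p}$. In part (2), since $x,y\notin\mathfrak{p}\cup\mathfrak{q}$ and $\mathfrak{p}$ is prime, $xy\notin\mathfrak{p}$, hence $xy\notin\mathfrak{p}\cap\mathfrak{q}=\sqrt{Q}$. So in both situations the hypothesis $IJ\not\subseteq\sqrt{Q}$ is met; this is where the specific form of $\sqrt{Q}$ (either prime or the intersection of the two minimal primes) enters, and it is essentially the only place it is used.

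The second step is to apply Theorem~\ref{special}(8) to conclude that either $(Q:_{R}xy)=(Q:_{R}x)$ or $(Q:_{R}xy)=(Q:_{R}y)$. Combining this with the trivial inclusions $(Q:_{R}x)\subseteq(Q:_{R}xy)$ and $(Q:_{R}y)\subseteq(Q:_{R}xy)$, the first equality forces $(Q:_{R}y)\subseteq(Q:_{R}x)$ and the second forces $(Q:_{R}x)\subseteq(Q:_{R}y)$. In either case the two colon ideals are comparable, which is precisely what it means for the family $\{(Q:_{R}x)\}$, indexed by $x$ in the given complement, to be totally ordered by inclusion.

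There is no real obstacle here beyond identifying the correct item of Theorem~\ref{special} to invoke; the only subtle point is confirming that $x,y$ lying outside $\mathfrak{p}$ (respectively $\mathfrak{p}\cup\mathfrak{q}$) really does prevent $xy$ from entering $\sqrt{Q}$, and this is exactly where the prime-avoidance type reasoning in the two cases diverges, though both come down to the primality of $\mathfrak{p}$.
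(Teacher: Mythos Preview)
Your proof is correct and follows essentially the same route as the paper's. The only difference is cosmetic: the paper argues directly from the definition that $rxy\in Q$ and $xy\notin\sqrt{Q}$ force $rx\in Q$ or $ry\in Q$, thereby obtaining $(Q:_Rxy)=(Q:_Rx)\cup(Q:_Ry)$ and invoking the union-of-ideals trick, whereas you cite the already-packaged dichotomy of Theorem~\ref{special}(8); both arrive at the same comparability conclusion by the same mechanism.
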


\begin{proof}
(1) Let $x,y\in R\backslash\mathfrak{p}$. Then $xy\in R\backslash\mathfrak{p}
$. It is clear that $(Q:_Rx)\cup(Q:_Ry)\subseteq(Q:_Rxy)$. Assume that $%
r\in(Q:_Rxy)$. Therefore $rxy\in Q$, whence $rx\in Q$ or $ry\in Q$, because $%
xy\notin\sqrt{Q}$. Consequently $(Q:_Rxy)=(Q:_Rx)\cup(Q:_Ry)$. Thus, either $%
(Q:_Rxy)=(Q:_Rx)$ or $(Q:_Rxy)=(Q:_Ry)$, and so either $(Q:_Ry)%
\subseteq(Q:_Rx)$ or $(Q:_Rx)\subseteq(Q:_Ry)$.

(2) Is similar to the proof of (1).
\end{proof}

\begin{corollary}
Let $f:R\longrightarrow R^{\prime }$ be a homomorphism of
commutative rings. Then the following statements hold:

\begin{enumerate}
\item If $Q^{\prime }$ is a special 2-absorbing primary ideal of $%
R^{\prime }$, then $f^{-1}(Q^{\prime })$ is a special 2-absorbing primary
ideal of $R$.

\item If $f$ is an epimorphism and $Q$ is a special 2-absorbing primary
ideal of $R$ containing $\ker (f)$, then $f(Q)$ is a special 2-absorbing
primary ideal of $R^{\prime }$.
\end{enumerate}
\end{corollary}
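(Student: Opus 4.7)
The plan is to read this final corollary as an immediate specialization of Proposition \ref{monoepi}, using the fact that a special 2-absorbing primary ideal is by definition just a uniformly 2-absorbing primary ideal whose order equals $1$. So the strategy is: invoke Proposition \ref{monoepi} to get that $f^{-1}(Q')$ (resp.\ $f(Q)$) is uniformly 2-absorbing primary, then use the order bounds supplied by that proposition to force the order to collapse to $1$.

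For part (1), I would start with the hypothesis that $Q'$ is special 2-absorbing primary of $R'$, so by definition $\mbox{2-}ord_{R'}(Q')=1$. Applying Proposition \ref{monoepi}(1), the preimage $f^{-1}(Q')$ is a uniformly 2-absorbing primary ideal of $R$ satisfying the inequality $\mbox{2-}ord_R(f^{-1}(Q')) \leq \mbox{2-}ord_{R'}(Q') = 1$. Since $\mbox{2-}ord_R(f^{-1}(Q'))$ is by definition the \emph{smallest} positive integer with the defining property and must therefore be at least $1$, we conclude $\mbox{2-}ord_R(f^{-1}(Q')) = 1$; this is exactly the statement that $f^{-1}(Q')$ is a special 2-absorbing primary ideal of $R$.

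For part (2), the argument is parallel. Assuming $f$ is an epimorphism, $Q$ is special 2-absorbing primary (so $\mbox{2-}ord_R(Q)=1$), and $\ker(f) \subseteq Q$, Proposition \ref{monoepi}(2) yields that $f(Q)$ is a uniformly 2-absorbing primary ideal of $R'$ with $\mbox{2-}ord_{R'}(f(Q)) \leq \mbox{2-}ord_R(Q) = 1$. Again the positivity of the order forces equality, so $f(Q)$ is special 2-absorbing primary.

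There is essentially no obstacle here: the corollary is formally a translation of Proposition \ref{monoepi} under the renaming of $\mbox{2-}ord = 1$ as ``special''. The only small thing one should not forget to mention is that $\mbox{2-}ord$ is defined as a positive integer, so the inequality $\mbox{2-}ord \leq 1$ actually pins it down to $1$ rather than leaving it potentially undefined; once this observation is made, both items are one-line consequences of the earlier proposition.
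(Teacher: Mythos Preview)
Your proof is correct and matches the paper's approach exactly: the paper's entire proof is the one-liner ``By Proposition \ref{monoepi},'' and your write-up simply unpacks why the order inequality from that proposition forces $\mbox{2-}ord = 1$ in both parts.
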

\begin{proof}
By Proposition \ref{monoepi}.
\end{proof}

Let $R$ be a ring with identity. We recall that if $f=a_0+a_1X+\cdots+a_tX^t$ 
is a polynomial on the ring $R$, then {\it content} of $f$ is defined as the ideal of $R$, generated by the coefficients of $f$, i.e.
$c( f )=(a_0,a_1,\dots,a_t)$. Let $T$ be an $R$-algebra and $c$ the function from $T$ to the ideals of $R$ defined by $c(f)=\cap\{I\mid$ $I$ \mbox{is an ideal of} $R$ \mbox{and} $f\in IT\}$ known as the content of $f$. Note that the content function $c$ is nothing but the generalization of the content of a polynomial $f\in R[X]$. The $R$-algebra $T$ is called a {\it content $R$-algebra} if the following conditions hold:
\begin{enumerate}
\item For all $f\in T$, $f\in c(f)T$.
\item (Faithful flatness ) For any $r\in R$ and $f\in T$, the equation $c(rf )=rc(f)$ holds and
$c(1_T)=R$.
\item (Dedekind-Mertens content formula) For each $f,g\in T$, there exists a natural
number $n$ such that $c(f)^nc(g)=c(f)^{n-1}c(fg)$.
\end{enumerate}
For more information on content algebras and their examples we refer to \cite{north}, \cite{ohm} and \cite{rush}.
In \cite{nas} Nasehpour gave the definition of a Gaussian $R$-algebra as follows: Let $T$ be an $R$-algebra such that $f\in c(f)T$ for all $f\in T$. $T$ is said to be a Gaussian $R$-algebra if $c(fg)=c( f )c(g)$, for all $f,g\in T$.
\begin{example}(\cite{nas})
Let $T$ be a content $R$-algebra such that $R$ is a Pr\"{u}fer domain. Since every
nonzero finitely generated ideal of $R$ is a cancellation ideal of $R$, the Dedekind-Mertens 
content formula causes $T$ to be a Gaussian $R$-algebra.
\end{example}

\begin{theorem}\label{pruf1}
Let R be a Pr\"{u}fer domain, $T$ a content $R$-algebra and $Q$ an ideal of $R$. 
Then $Q$ is a special 2-absorbing primary ideal of $R$ if and only if $QT$ is a special 2-absorbing primary ideal of $T$.
\end{theorem}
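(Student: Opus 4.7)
The plan is to reduce the statement, in both directions, to the ideal-theoretic characterisation of special 2-absorbing primary ideals given in Theorem~\ref{special}(9), by pushing elements of $T$ to their contents in $R$. Since $R$ is Pr\"ufer and $T$ is a content $R$-algebra, $T$ is a Gaussian $R$-algebra (as noted in the example immediately preceding the theorem), so $c(fg)=c(f)c(g)$ for all $f,g\in T$. From the content-algebra axioms one gets, for every ideal $I$ of $R$, that $f\in IT$ if and only if $c(f)\subseteq I$ (since $f\in c(f)T$ and $c(f)=\bigcap\{I\mid f\in IT\}$), and consequently $IT\cap R=I$; combined with the Gaussian identity this yields $\sqrt{IT}=\sqrt{I}T$, a standard property of Gaussian content algebras.

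For the forward direction, assume $Q$ is special 2-absorbing primary in $R$ and let $f,g,h\in T$ with $fgh\in QT$. The content identities give $c(f)c(g)c(h)=c(fgh)\subseteq Q$. I then invoke Theorem~\ref{special}(9) on the ideals $c(f),c(h),c(g)$ of $R$; exploiting commutativity of the ideal product to place $c(f)c(h)$ in the ``radical'' slot, I obtain $c(f)c(h)\subseteq\sqrt{Q}$, or $c(f)c(g)\subseteq Q$, or $c(g)c(h)\subseteq Q$. Translating back to $T$ via the Gaussian identity and $f\in IT\Leftrightarrow c(f)\subseteq I$, these three alternatives read $fh\in\sqrt{QT}$, or $fg\in QT$, or $gh\in QT$, which is precisely the element form of ``$QT$ is special 2-absorbing primary''.

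For the converse, assume $QT$ is special 2-absorbing primary in $T$, and let $a,b,c\in R$ with $abc\in Q$. Viewing $a,b,c$ as elements of $T$ via the embedding $R\hookrightarrow T$, we have $abc\in QT$, so $ab\in QT$, or $ac\in\sqrt{QT}$, or $bc\in QT$. Intersecting with $R$ and using $QT\cap R=Q$ together with $\sqrt{QT}\cap R=\sqrt{Q}$ (both of which follow from the content-algebra facts above), I conclude $ab\in Q$, or $ac\in\sqrt{Q}$, or $bc\in Q$, so $Q$ is special 2-absorbing primary in $R$.

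The main subtlety is the asymmetry of the defining condition: exactly one of the three products is required to lie in $\sqrt{Q}$ while the other two lie in $Q$. Theorem~\ref{special}(9) states the ideal form with the radical attached specifically to $IJ$, but because the ideal product is commutative I am free to permute the three factors and place the radical slot on whichever pair I need after translating through the content function. This is what makes the forward implication line up exactly with the element-wise definition of special 2-absorbing primary for $QT$; the remaining work is purely bookkeeping with the three standard content-algebra identities.
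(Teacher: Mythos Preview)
Your proof is correct and follows essentially the same route as the paper: the forward direction uses the Gaussian property to translate $fgh\in QT$ into $c(f)c(g)c(h)\subseteq Q$ and then invokes Theorem~\ref{special}(9), exactly as the paper does (up to a relabelling of the three factors). For the converse, the paper simply cites Corollary~\ref{frac}(1) together with the content-algebra fact $QT\cap R=Q$, which is precisely what your element-level argument unpacks; note that the identity $\sqrt{QT}\cap R=\sqrt{Q}$ you use already follows directly from $QT\cap R=Q$, so the full equality $\sqrt{IT}=\sqrt{I}\,T$ is not actually needed.
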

\begin{proof}
$(\Rightarrow)$ Assume that $Q$ is a special 2-absorbing primary ideal of $R$. Let $fgh\in QT$ for some $f,g,h\in T$. Then $c(fgh)\subseteq Q$. Since $R$ is a Pr\"{u}fer domain and $T$ is a content $R$-algebra, 
then $T$ is a Gaussian $R$-algebra. Therefore $c(fgh)=c(f)c(g)c(h)\subseteq Q$. Since $Q$ is a special 2-absorbing primary ideal of $R$,
Theorem \ref{special} implies that either
$c(f)c(g)=c(fg)\subseteq Q$ or $c(f)c(h)=c(fh)\subseteq Q$ or $c(g)c(h)=c(gh)\subseteq\sqrt{Q}$. So $fg\in c(fg)T\subseteq QT$ or $fh\in c(fh)T\subseteq QT$
or $gh\in c(gh)T\subseteq\sqrt{Q}T\subseteq\sqrt{QT}$. Consequently $QT$ is a special 2-absorbing primary ideal of $T$.\newline
$(\Leftarrow)$ Note that since $T$ is a content $R$-algebra, $QT\cap R=Q$ for every ideal $Q$ of $R$. Now, apply Corollary \ref{frac}(1).
\end{proof}

The algebra of all polynomials over an arbitrary ring with an arbitrary number of indeterminates is an example of content algebras.
\begin{corollary}\label{last}
Let $R$ be a Pr\"{u}fer domain and $Q$ be an ideal of $R$. Then $Q$ is a special 2-absorbing primary ideal of $R$
if and only if $Q[X]$ is a special 2-absorbing primary ideal of $R[X]$.
\end{corollary}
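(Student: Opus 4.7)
The plan is to deduce the corollary as an immediate specialization of Theorem~\ref{pruf1} by taking $T = R[X]$. The only two things I need to check are that $R[X]$ fits into the framework of a content $R$-algebra, and that the ideal $QT$ appearing in Theorem~\ref{pruf1} coincides with the ideal $Q[X]$ appearing in the statement of the corollary.

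First, I would invoke the remark made immediately before the corollary, namely that the polynomial algebra over an arbitrary ring in any number of indeterminates is an example of a content algebra; in particular, $T = R[X]$ is a content $R$-algebra, with content function $c$ that assigns to a polynomial $f$ the ideal of $R$ generated by its coefficients. Second, I would observe that $QR[X]$ equals $Q[X]$: an element of $QR[X]$ is by definition a finite $R[X]$-linear combination of elements of $Q$, which is exactly a polynomial with coefficients in $Q$. This identification is a routine check straight from the definition of an extended ideal and needs no more than one sentence in the final write-up.

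With these two observations in place, the corollary follows directly from Theorem~\ref{pruf1} applied to the Pr\"{u}fer domain $R$ and the content $R$-algebra $T = R[X]$: both implications of that theorem specialize without modification to yield the stated equivalence. Because the genuine content is already contained in Theorem~\ref{pruf1} (where the Gaussian property of the content function, together with the characterization in Theorem~\ref{special}, does the real work), there is essentially no obstacle here; the only pitfall to avoid is failing to identify $QR[X]$ with $Q[X]$, but this is immediate. Consequently the proof proposal reduces to a one-line citation of Theorem~\ref{pruf1}, preceded by the remark identifying $R[X]$ as a content $R$-algebra.
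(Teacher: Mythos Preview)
Your proposal is correct and matches the paper's approach exactly: the paper states the corollary immediately after the remark that polynomial algebras are content algebras, treating it as a direct specialization of Theorem~\ref{pruf1} with $T=R[X]$ and no further argument. Your added observation that $QR[X]=Q[X]$ is a harmless clarification the paper leaves implicit.
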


\begin{corollary}
Let $S$ be a multiplicatively closed subset of $R$ and $Q$ be a proper ideal
of $R$. Then the following conditions hold:
\begin{enumerate}
\item If $Q$ is a special 2-absorbing primary ideal of $R$ such that $%
Q\cap S=\emptyset $, then $S^{-1}Q$ is a special 2-absorbing primary ideal
of $S^{-1}R$ with $\mbox{2-}ord(S^{-1}Q)\leq \mbox{2-}ord(Q)$.

\item If $S^{-1}Q$ is a special 2-absorbing primary ideal of $S^{-1}R$ and 
$S\cap Z_{Q}(R)=\emptyset $, then $Q$ is a special 2-absorbing primary
ideal of $R$ with $\mbox{2-}ord(Q)\leq \mbox{2-}ord(S^{-1}Q).$
\end{enumerate}
\end{corollary}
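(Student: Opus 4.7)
The plan is to derive this corollary directly from Proposition \ref{multi}, which gives the analogous result for uniformly 2-absorbing primary ideals. By the definition just preceding Theorem \ref{special}, a special 2-absorbing primary ideal is precisely a uniformly 2-absorbing primary ideal $Q$ with $\mbox{2-}ord(Q)=1$. So for part (1), I would apply Proposition \ref{multi}(1) to conclude that $S^{-1}Q$ is a uniformly 2-absorbing primary ideal of $S^{-1}R$ with $\mbox{2-}ord(S^{-1}Q)\leq \mbox{2-}ord(Q)=1$. Since the order is by definition a positive integer, this forces $\mbox{2-}ord(S^{-1}Q)=1$, which is exactly the statement that $S^{-1}Q$ is special 2-absorbing primary. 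The asserted inequality $\mbox{2-}ord(S^{-1}Q)\leq \mbox{2-}ord(Q)$ is then automatic.

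For part (2), the argument is symmetric: starting from a special 2-absorbing primary ideal $S^{-1}Q$ of $S^{-1}R$ (so $\mbox{2-}ord(S^{-1}Q)=1$) with $S\cap Z_{Q}(R)=\emptyset$, Proposition \ref{multi}(2) yields that $Q$ is uniformly 2-absorbing primary with $\mbox{2-}ord(Q)\leq \mbox{2-}ord(S^{-1}Q)=1$, hence $\mbox{2-}ord(Q)=1$, i.e.\ $Q$ is special 2-absorbing primary. There is no real obstacle here, since the content of the result is entirely contained in Proposition \ref{multi} and the one-line observation that the order-$1$ condition is preserved in both directions by the order inequality already recorded there.
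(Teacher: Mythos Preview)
Your proposal is correct and follows exactly the paper's approach: the paper's proof is simply ``By Proposition \ref{multi},'' and your argument spells out precisely why that citation suffices, namely that the order inequalities in Proposition \ref{multi} force $\mbox{2-}ord=1$ to be preserved in both directions.
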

\begin{proof}
By Proposition \ref{multi}.
\end{proof}

In view of Theorem \ref{product1} and its proof, we have the following
result.
\begin{corollary}\label{result1} 
Let $R=R_1\times R_2$, where $R_1$ and $R_2$
are rings with $1\neq0$. Let $Q$ be a proper ideal of $R$. Then the
following conditions are equivalent:

\begin{enumerate}
\item $Q$ is a special 2-absorbing primary ideal of $R$;

\item Either $Q=Q_1\times R_2$ for some special 2-absorbing primary ideal $%
Q_1$ of $R_1$ or $Q=R_1\times Q_2$ for some special 2-absorbing primary
ideal $Q_2$ of $R_2$ or $Q=Q_1\times Q_2$ for some prime ideal $Q_1$ of $R_1$
and some prime ideal $Q_2$ of $R_2$.
\end{enumerate}
\end{corollary}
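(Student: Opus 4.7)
The plan is to obtain Corollary \ref{result1} as the specialization of Theorem \ref{product1} to the case $\mbox{2-}ord_R(Q)=1$. The key observation is that a uniformly primary ideal $P$ satisfies $ord(P)=1$ if and only if $P$ is prime: indeed, $ord(P)=1$ says that whenever $ab\in P$ and $a\notin P$ one has $b^{1}=b\in P$, which is exactly primality. Thus the three-case structure should match that of Theorem \ref{product1}, with the ``uniformly primary'' clauses in the third case upgraded to ``prime''.

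For the forward direction (1)$\Rightarrow$(2), I would assume $Q$ is uniformly 2-absorbing primary with $\mbox{2-}ord_R(Q)=1$. Theorem \ref{product1} then gives three possible shapes for $Q$. In the case $Q=Q_1\times R_2$, the quotient argument in the proof of Theorem \ref{product1}, combined with Corollary \ref{frac}(2) and the order bound of Proposition \ref{monoepi}, identifies $Q_1$ as uniformly 2-absorbing primary in $R_1$ with $\mbox{2-}ord_{R_1}(Q_1)\leq \mbox{2-}ord_R(Q)=1$; hence $Q_1$ is special 2-absorbing primary. The case $Q=R_1\times Q_2$ is symmetric. In the remaining case $Q=Q_1\times Q_2$ with each $Q_i$ uniformly primary, the inequality $\max\{ord_{R_1}(Q_1),ord_{R_2}(Q_2)\}\leq \mbox{2-}ord_R(Q)=1$ recorded inside the proof of Theorem \ref{product1} forces both $ord_{R_1}(Q_1)=1$ and $ord_{R_2}(Q_2)=1$, so $Q_1$ and $Q_2$ are prime by the initial observation.

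For the reverse direction (2)$\Rightarrow$(1), the first two shapes follow at once from Theorem \ref{product1} together with the order estimate of Proposition \ref{monoepi}(1) applied to the projection $R\to R_i$, which yields $\mbox{2-}ord_R(Q)\leq \mbox{2-}ord_{R_i}(Q_i)=1$. For the third shape, a prime ideal is a uniformly primary ideal of order $1$, so applying Theorem \ref{product1} in direction (2)$\Rightarrow$(1) together with the inequality $\mbox{2-}ord_R(Q_1\times Q_2)\leq \max\{ord_{R_1}(Q_1),ord_{R_2}(Q_2)\}=1$ (obtained via Theorem \ref{intersection}, exactly as in the proof of Theorem \ref{product1}) gives that $Q$ is special 2-absorbing primary.

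I do not expect a genuine obstacle here: the argument is essentially book-keeping on top of Theorem \ref{product1}, and the only mildly delicate point is the careful tracking of the order estimates, which is needed to collapse ``uniformly primary of order $1$'' to ``prime'' in the third case. Everything else is already contained in the work done for Theorem \ref{product1}.
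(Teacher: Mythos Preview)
Your proposal is correct and takes essentially the same approach as the paper, which simply states that the corollary follows ``in view of Theorem \ref{product1} and its proof''; you have accurately unpacked the order-tracking that this phrase encodes, including the crucial inequality $\max\{ord_{R_1}(Q_1),ord_{R_2}(Q_2)\}\leq \mbox{2-}ord_R(Q)$ from the proof of Theorem \ref{product1} that forces primality in the third case.
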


\begin{corollary}\label{product4} 
Let $R=R_1\times R_2$, where $R_1$ and $R_2$ are rings with $%
1\neq0$. Suppose that $Q_1$ is a proper ideal of $R_1$ and $Q_2$ is a proper
ideal of $R_2$. Then $Q_1\times Q_2$ is a special 2-absorbing primary ideal
of $R$ if and only if it is a 2-absorbing ideal of $R$.
\end{corollary}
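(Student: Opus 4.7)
My plan is to use Corollary \ref{result1} together with Proposition \ref{prop1}(1)(b), reducing everything to the single nontrivial fact that the product of two prime ideals (one from each factor) is a 2-absorbing ideal of the product ring.

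For the $(\Leftarrow)$ direction I would appeal directly to Proposition \ref{prop1}(1)(b): any 2-absorbing ideal is uniformly 2-absorbing primary with $\mbox{2-}ord=1$, which by definition means it is special 2-absorbing primary. No product-ring structure is needed here.

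For the $(\Rightarrow)$ direction, suppose $Q_1\times Q_2$ is special 2-absorbing primary with $Q_1$ and $Q_2$ both proper. Since neither factor equals the whole ring, cases~(2)(i) and (2)(ii) of Corollary \ref{result1} are excluded, forcing the third case: $Q_1$ is prime in $R_1$ and $Q_2$ is prime in $R_2$. Then I would verify 2-absorbingness directly. Given $(a_1,a_2)(b_1,b_2)(c_1,c_2)\in Q_1\times Q_2$, primality of $Q_1$ says one of $a_1,b_1,c_1$ lies in $Q_1$, and primality of $Q_2$ says one of $a_2,b_2,c_2$ lies in $Q_2$. A short case analysis on which index in each coordinate contributes (there are essentially three inequivalent pairings up to relabeling) shows that some pair among $\{(a_1,a_2)(b_1,b_2),\,(a_1,a_2)(c_1,c_2),\,(b_1,b_2)(c_1,c_2)\}$ has both coordinates landing in the respective $Q_i$ and hence lies in $Q_1\times Q_2$.

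The only mildly subtle point is that case analysis at the end of the forward direction; I anticipate no real obstacle, since whenever the $Q_1$-absorbing index and the $Q_2$-absorbing index coincide the corresponding element itself is in $Q_1\times Q_2$ (so any pairwise product containing it works), and when they differ, the pairwise product of those two elements puts the $Q_1$-hit into the first coordinate and the $Q_2$-hit into the second. Hence the entire proof collapses into a one-line invocation of Corollary \ref{result1} for $(\Rightarrow)$ plus the elementary verification above, and a one-line invocation of Proposition \ref{prop1}(1)(b) for $(\Leftarrow)$.
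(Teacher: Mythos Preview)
Your proof is correct and follows essentially the same line as the paper's: both reduce the forward direction to the third alternative of Corollary \ref{result1}, obtaining that $Q_1$ and $Q_2$ are prime. The only difference is that where the paper then invokes \cite[Theorem 4.7]{AB1} (which characterizes 2-absorbing ideals of a product ring and in particular shows that a product of two primes is 2-absorbing, and conversely), you instead verify the 2-absorbingness of $P_1\times P_2$ by the short pigeonhole case analysis, and you handle $(\Leftarrow)$ via Proposition \ref{prop1}(1)(b) rather than via the converse half of \cite[Theorem 4.7]{AB1}. This makes your argument self-contained within the present paper, while the paper's version is a one-line appeal to the literature; substantively the two proofs coincide.
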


\begin{proof}
See Corollary \ref{result1} and apply \cite[Theorem 4.7]{AB1}.
\end{proof}

\begin{corollary}
Let $R=R_1\times R_2\times\cdots\times R_n$, where $2\leq n<\infty$, and $%
R_1,R_2, ...,R_n$ are rings with $1\neq0$. For a proper ideal $Q$ of $R$ the
following conditions are equivalent:

\begin{enumerate}
\item $Q$ is a special 2-absorbing primary ideal of $R$.

\item Either $Q=\times^n_{t=1}Q_t$ such that for some $k\in\{1,2,...,n\}$, $%
Q_k$ is a special 2-absorbing primary ideal of $R_k$, and $Q_t=R_t$ for
every $t\in\{1,2,...,n\}\backslash\{k\}$ or $Q=\times_{t=1}^nQ_t$ such that
for some $k,m\in\{1,2,...,n\}$, $Q_k$ is a prime ideal of $R_k$, $Q_m$ is a
prime ideal of $R_m$, and $Q_t=R_t$ for every $t\in\{1,2,...,n\}\backslash%
\{k,m\}$.
\end{enumerate}
\end{corollary}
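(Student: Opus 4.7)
The plan is induction on $n$, exactly mirroring the argument used for Theorem \ref{product3}. For the base case $n=2$, the statement is literally Corollary \ref{result1}. So suppose $n\geq 3$ and the result holds for $n-1$; I set $K=R_1\times\cdots\times R_{n-1}$, so that $R=K\times R_n$, and I apply Corollary \ref{result1} to this two-factor decomposition.

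For (1)$\Rightarrow$(2), Corollary \ref{result1} gives three mutually exclusive possibilities: (a) $Q=L\times R_n$ for some special 2-absorbing primary ideal $L$ of $K$; (b) $Q=K\times L_n$ for some special 2-absorbing primary ideal $L_n$ of $R_n$; or (c) $Q=L\times L_n$ for some prime ideal $L$ of $K$ and some prime ideal $L_n$ of $R_n$. In case (a) the induction hypothesis unfolds $L$ into one of the two shapes of (2) for $K$; appending $R_n$ in position $n$ places $Q$ into one of the two shapes of (2) for $R$. Case (b) is exactly the first shape of (2) with distinguished index $k=n$. For case (c) I would invoke the standard fact that a proper ideal $L$ of a finite product $R_1\times\cdots\times R_{n-1}$ is prime if and only if $L=\times_{t=1}^{n-1}L_t$ where for a unique $k\leq n-1$ the factor $L_k$ is a prime ideal of $R_k$ and $L_t=R_t$ for $t\neq k$; combined with $L_n$ prime in $R_n$, this gives exactly the two-prime-coordinate shape of (2) with $m=n$.

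For the converse (2)$\Rightarrow$(1), I would reverse the three cases. An ideal of the first shape of (2) with $k\leq n-1$ factors as $L\times R_n$ where $L$ is special 2-absorbing primary in $K$ by the induction hypothesis, and Corollary \ref{result1} closes it; the subcase $k=n$ is immediate. An ideal of the second shape with both $k,m\leq n-1$ factors as $L\times R_n$ with $L$ special 2-absorbing primary in $K$ by the induction hypothesis in its two-prime-coordinate form, and Corollary \ref{result1} applies. The remaining subcase (one of $k,m$ equals $n$, the other $\leq n-1$) factors $Q$ as $L\times Q_n$ with $L$ prime in $K$ (by the standard product-prime fact above) and $Q_n$ prime in $R_n$; Corollary \ref{result1} again applies.

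The main obstacle is purely combinatorial bookkeeping of the indices $k$ and $m$ across the ``internal'' positions inside $K$ and the ``new'' position $n$; no new ring-theoretic content is needed beyond Corollary \ref{result1} and the classical description of prime ideals in a finite direct product.
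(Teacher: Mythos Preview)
Your proof is correct and is essentially the same approach as the paper's: the paper simply writes ``By Theorem \ref{product3}'', meaning that one reruns the inductive argument of that theorem with the $2\text{-}ord=1$ constraint tracked throughout (so that Theorem \ref{product1} specializes to Corollary \ref{result1} and Lemma \ref{product2} specializes to the classical description of primes in a finite product). You have written out exactly that specialization explicitly rather than citing the parallel theorem.
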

\begin{proof}
By Theorem \ref{product3}.
\end{proof}

\vspace{5mm} \noindent \footnotesize 
\begin{minipage}[b]{10cm}
Hojjat Mostafanasab \\
Department of Mathematics and Applications, \\ 
University of Mohaghegh Ardabili, \\ 
P. O. Box 179, Ardabil, Iran. \\
Email: h.mostafanasab@uma.ac.ir, \hspace{1mm} h.mostafanasab@gmail.com
\end{minipage}\\

\vspace{2mm} \noindent \footnotesize
\begin{minipage}[b]{10cm}
\"{U}nsal Tekir and G\"{u}l\c{s}en Ulucak\\
Department of Mathematics, \\ 
Marmara University, \\ 
Ziverbey, Goztepe, Istanbul 34722, Turkey. \\
Email: utekir@marmara.edu.tr,\hspace{1mm} gulsenulucak58@gmail.com
\end{minipage}


\begin{thebibliography}{99}
\bibitem{AB1} D. F. Anderson and A. Badawi, On $n$-absorbing ideals of
commutative rings, \textit{Comm. Algebra} \textbf{39} (2011) 1646--1672.

\bibitem{AKL} D. D. Anderson, K. R. Knopp and R. L. Lewin, Ideals generated
by powers of elements, \textit{Bull. Austral. Math. Soc.,} \textbf{49} (1994)
373--376.


\bibitem{B} A. Badawi, On $2$-absorbing ideals of commutative rings, Bull.
Austral. Math. Soc., \textbf{75} (2007), 417--429.

\bibitem{BTY} A. Badawi, \"{U}. Tekir and E. Yetkin, On $2$-absorbing primary
ideals in commutative rings, Bull. Korean Math. Soc., \textbf{51} (4)
(2014), 1163--1173.

\bibitem{YB} A. Badawi and A. Yousefian Darani, On weakly $2$-absorbing
ideals of commutative rings, \textit{Houston J. Math.}, \textbf{39} (2013),
441--452.

\bibitem{CH} J. A. Cox and A. J. Hetzel, Uniformly primary ideals, {\it J. Pure
Appl. Algebra,} \textbf{212} (2008), 1-8.


\bibitem{Hoc} M. Hochster, Criteria for equality of ordinary and symbolic powers of primes, {\it Math. Z.} \textbf{133} (1973) 53-65.

\bibitem{Huk} J. Hukaba, {\it Commutative rings with zero divisors}, 
Marcel Dekker, Inc., New York, 1988.


\bibitem{Mos} H. Mostafanasab, E. Yetkin, U. Tekir and A. Yousefian Darani,
On $2$-absorbing primary submodules of modules over commutative rings, 
\textit{An. \c{S}t. Univ. Ovidius Constanta,} (in press)

\bibitem{nas} P. Nasehpour, {On the Anderson-Badawi $\omega_{R[X]}(I[X])=\omega_R(I)$ conjecture}, arXiv:1401.0459, (2014).

\bibitem{north} D. G. Northcott, {A generalization of a theorem on the content of polynomials}, {\it Proc. Cambridge Phil.
Soc.,} {\bf55} (1959), 282--288.

\bibitem{ohm} J. Ohm and D. E. Rush, { Content modules and algebras}, {\it Math. Scand.}, {\bf31} (1972), 49--68.

\bibitem{rush} D. E. Rush, { Content algebras}, {\it Canad. Math. Bull.}, {\bf21} (3) (1978), 329--334.

\bibitem{Sh} R.Y. Sharp, {\it Steps in commutative algebra},  Second edition, Cambridge University Press, Cambridge, 2000.

\bibitem{YF} A. Yousefian Darani and F. Soheilnia, $2$-absorbing and weakly $%
2$-absorbing submoduels, \textit{Thai J. Math.} \textbf{9}(3) (2011)
577--584.

\bibitem{YF2} A. Yousefian Darani and F. Soheilnia, On $n$-absorbing
submodules, \textit{Math. Comm.}, \textbf{17} (2012), 547-557.
\end{thebibliography}
\end{document}